\newcommand{\mylabel}[2]{#2\def\@currentlabel{#2}\label{#1}}
\newcommand{%
    
    \import{./figures/}{.pdf_tex}
}[1]{%
    
    \import{./figures/}{#1.pdf_tex}
}
\def\moverlay{\mathpalette\mov@rlay}
\def\mov@rlay#1#2{\leavevmode\vtop{%
   \baselineskip\z@skip \lineskiplimit-\maxdimen
   \ialign{\hfil$\m@th#1##$\hfil\cr#2\crcr}}}
\newcommand{\charfusion}[3][\mathord]{
    #1{\ifx#1\mathop\vphantom{#2}\fi
        \mathpalette\mov@rlay{#2\cr#3}
      }
    \ifx#1\mathop\expandafter\displaylimits\fi}
\newcommand{\cupdot}{\charfusion[\mathbin]{\cup}{\cdot}}
\theoremstyle{remark}
\newtheorem{lem}[theorem]{Lemma}
\begin{document}
\title{The Median of Sierpi\'{n}ski Triangle Graphs}
%
%
\author{Kannan Balakrishnan\inst{1}\orcidID{0000-0003-1285-5949} \and
Manoj Changat\inst{2}\orcidID{0000-0001-7257-6031} \and
M V. Dhanyamol\inst{3}\orcidID{0000-0003-4487-7704} \and \\
Andreas M. Hinz\inst{4} \and
Hrishik Koley \inst{5} \and
Divya Sindhu Lekha \inst{3}\orcidID{0000-0002-6617-5378}
}
\authorrunning{K. Balakrishnan et al.}
%
\institute{Department of Computer Applications, Cochin University of Science and Technology, India 
\email{mullayilkannan@gmail.com}\\
\and
Department of Futures Studies, University of Kerala, Thiruvananthapuram, India
\email{mchangat@gmail.com}\\
\and
Indian Institute of Information Technology Kottayam, India
\email{\{dhanya,divyaslekha\}@iiitkottayam.ac.in}\\
\and
Mathematical Institute, LMU M\"{u}nchen, Munich, Germany
\email{hinz@math.lmu.de}\\
\and
Indian Statistical Institute, Bengaluru, India
\email{hrishik.math@outlook.com}
}
\maketitle     
\vspace{-1em}
\begin{abstract}
  The median $M$ of a graph $G$ is the set of vertices with a minimum total distance to all other vertices in the graph. In this paper, we determine the median of Sierpi\'{n}ski triangle graphs. Sierpi\'{n}ski triangle graphs, also known as Sierpi\'{n}ski gasket graphs of order $n$ are graphs formed by contracting all non-clique edges from the Sierpi\'{n}ski graphs of order ($n+1$).

\keywords{Shortest path \and Median \and Sierpi\'{n}ski triangle graph.}
\end{abstract}

\section{Introduction}
We deal with many networks in our day-to-day lives. The metric properties of graphs allow us to study the topological features of networks such as social networks, computer networks, and transportation networks in detail~\cite{hemi-2011}. These metric properties help identify important nodes, detect communities, and understand the flow of resources in the network. For example, in social networks, they are used to identify influential individuals or tightly-knit communities.

As a metric property, the median of a graph is significant and finding it makes designing networks much easier~\cite{babe-1981}. For example, if we can successfully identify the median of a graph in a transportation network then it helps us optimize the placement of various facilities, in such a way, that it minimizes the overall distances as well as makes any facility easily accessible on the network. Similarly, in the case of the spread of diseases, finding medians aids in optimizing the placement of hospitals and clinics for effective resource allocation or deciding vaccination strategies.

The median vertices of a graph are the vertices with the minimum total distance from all other vertices in it. Given a connected (simple) graph $G=(V;E)$ with $|G| \ge 2$, the distance between two vertices is given by the length of a shortest path between them. The {\bf total distance} of a vertex $v \in V$ is defined as the sum of distances from $v$ to all other vertices. Thus, {\bf median vertices} are vertices that minimise this total distance and form the {\bf median} of $G$.

Let $G=(V, E)$ be a connected (simple) graph with $|G|\geq 2$, and with the canonical distance function $d$. The {\bf total distance} $d(v)$ of a vertex $v$ is defined as the sum of distances $d(v,u)$ to all other vertices $u$. A {\bf median vertex} is a vertex that minimizes the total distance over $V$. The {\bf eccentricity} of $v\in V$ is $\epsilon(v)=\max_{u\in V}d(v,u)$ and its {\bf average distance} is $\overline{d}(v)=d(v)/(|G|-1)$. We then have, 
\begin{lem}\label{lem:avdisteccentric}
	$\text{For all }v\in V:\;|G|-1\leq d(v)\leq (|G|-1)\epsilon(v),\;1\leq \overline{d}(v)\leq \epsilon(v)\,.$
\end{lem}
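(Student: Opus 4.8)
The plan is to bound the total distance $d(v)=\sum_{u\in V\setminus\{v\}}d(v,u)$ term by term, noting that this sum ranges over exactly $|G|-1$ summands. Both inequalities for $d(v)$ then follow by estimating each summand against a common bound, and the inequalities for $\overline{d}(v)$ follow immediately by dividing by $|G|-1$.

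First I would establish the lower bound. Since $G$ is a simple graph, any two distinct vertices are non-adjacent or adjacent, and in either case the canonical distance between distinct vertices satisfies $d(v,u)\geq 1$. Summing this over all $u\in V\setminus\{v\}$ gives $d(v)\geq |G|-1$. Next, for the upper bound, I would invoke the definition of eccentricity directly: for every $u\in V$ we have $d(v,u)\leq \max_{w\in V}d(v,w)=\epsilon(v)$, so summing over the $|G|-1$ vertices $u\neq v$ yields $d(v)\leq(|G|-1)\,\epsilon(v)$.

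Finally, since $|G|\geq 2$ we have $|G|-1\geq 1>0$, so dividing the chain $|G|-1\leq d(v)\leq(|G|-1)\,\epsilon(v)$ through by $|G|-1$ preserves the inequalities and gives $1\leq\overline{d}(v)\leq\epsilon(v)$, using $\overline{d}(v)=d(v)/(|G|-1)$. This completes all four inequalities.

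There is no substantial obstacle here; the argument is a routine termwise comparison. The only points requiring care are bookkeeping ones: verifying that the defining sum for $d(v)$ contains precisely $|G|-1$ terms, and confirming that $d(v,u)\geq 1$ for distinct $v,u$ relies on $G$ being a simple graph so that distinct vertices are genuinely at positive distance. The hypothesis $|G|\geq 2$ is what guarantees the sum is nonempty and that division by $|G|-1$ is legitimate.
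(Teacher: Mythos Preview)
Your proof is correct and follows exactly the reasoning the paper has in mind: the paper simply declares the lemma ``evident, with equalities for complete graphs,'' and your termwise bounds $1\le d(v,u)\le\epsilon(v)$ summed over the $|G|-1$ vertices $u\neq v$ are precisely the evident argument being alluded to.
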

\noindent This is evident, with equalities for complete graphs.\\

The \textbf{proximity} of G is $$\mathrm{prox}(G) = \mathrm{min} \{\overline{d}(v) | v \in V \}$$ and the \textbf{remoteness} of G is $$\mathrm{rem}(G) =\mathrm{max}\{\overline{d}(v) | v \in V \}.$$

The {\bf median} is the set of median vertices
$$
M(G)=\{v\in V\ |\ \overline{d}(v)=\mathrm{prox}(G)\}\,.
$$

Sierpi\'{n}ski triangle graphs are a very common fractal structure that is widely studied and can be found in nature~\cite{stew-1995}. Recently, Hinz et al. ~\cite{hinz-2022} have done a detailed study on the metric properties like center and periphery of Sierpi\'{n}ski triangle graphs. In the present paper, we study the median of Sierpi\'{n}ski triangle graphs. 

\section{Median of Sierpi\'{n}ski Graphs}

{\em Switching Tower of Hanoi} (STH) is a variation of the {\em Tower of Hanoi} (ToH) game~\cite{hinz-2018}. The goal of STH is to shift the tower of discs from one peg to another. Let us label the discs $1$ through $n$, based on their increasing size. The set of rules in ToH apply in STH only for the smallest disc; otherwise we can switch the entire tower of discs~1 through $d$ if they lie in order on a single peg, with the disc~$d+1$, if it lies on top of another peg. Therefore, there are moves of type~0 (only disc~1 is moving) and of type~1 (a non-empty subtower and a disc are switched). 

We model ToH and STH with $n\in\mathbb{N}_0$ discs by {\bf Hanoi graphs} $H^n$ and {\bf Sierpi\'{n}ski graphs} $S^n$, respectively. Any legal distribution of the discs among the pegs, labelled by elements of $T:=\{0,1,2\}$ for convenience, represents a {\bf state} $s=s_n\ldots s_1\in T^n$, where $s_d$ is the peg disc $d\in [n]:=\{1,\ldots,n\}$ is positioned. Hence $V(H^n)=T^n=V(S^n)$ and
$$
E(H^n)=\left\{\{\underline{s}ik^{d-1},\underline{s}jk^{d-1}\}\ |\ \underline{s}\in T^{n-d},\,\{i,j,k\}=T,\,d\in [n]\right\}\,,$$
$$
E(S^n)=\left\{\{\underline{s}ij^{d-1},\underline{s}ji^{d-1}\}\ |\ \underline{s}\in T^{n-d},\,\{i,j\}\in\textstyle{{T\choose 2}},\,d\in [n]\right\}=E_0^n \cupdot E_1^n\,,
$$
where $E_0^n$ and $E_1^n$ represent the moves to type 0 ($d=1$) and 1 ($d>1)$, respectively. Moreover, we also define the \textbf{extreme vertices} for both $H^n$ and $S^n$ as the vertices corresponding to the states of the ToH, in which all of the $n$ discs lie on a single peg. Note that $H^n\cong S^n$; see (\cite{hinz-2018}, Figure~4.4).

From \cite{med.SG} we know that
$
M(S^0)=\{{\rm e}\}$ with the empty word ${\rm e}$, $M(S^1)=T,\,M(S^2)=\left\{ij\ |\ \{i,j\}\in\textstyle{{T\choose 2}}\right\},
$ and $M(S^n)=\{ijk^{n-2}\ |\ i,j,k\in T,\,j\neq i\neq k\}$ for $n\geq 3$, such that
$$
|M(S^0)|=1,\,|M(S^1)|=3,\,|M(S^2)|=6,\,{\rm and}\,|M(S^n)|=12\;{\rm for}\,n\geq 3\,.
$$

For example, we may consider $S^3_3$, and we can see and deduce from Figure \ref{fig:S33}, that $M(S^3_3) = \{011,012,021,022,200,201,210,211,122,120,102,100\}$.

\tikzset{every picture/.style={line width=0.75pt}} 
\begin{figure}
    \centering
\begin{tikzpicture}[x=1pt,y=1pt,yscale=-1,xscale=1]

\draw    (331,80.41) -- (405,209.41) ;
\node at (414,209.41) {$222$}; 
\draw    (331,80.41) -- (257,209.41) ;
\node at (331,73.41) {$000$}; 
\draw    (405,209.41) -- (257,209.41) ;
\node at (248,209.41) {$111$};
\draw    (337,209.41) -- (372.24,151.5) ;
\draw    (325,209.41) -- (289.76,151.5) ;
\draw    (295,142) -- (367,142) ;
\node at (340,216.41) {$211$};
\node at (322,216.41) {$122$};
\node at (381.24,151.5) {$200$};
\node at (280.76,151.5) {$100$};
\node at (286,140) {$011$};
\node at (376,140) {$022$};
\draw    (326,142) -- (310,117) ;
\draw    (336,142) -- (352,117) ;
\draw    (314,109) -- (348,109) ;
\node at (323,148) {$012$};
\node at (301,117) {$010$};
\node at (339,148) {$021$};
\node at (361,117) {$020$};
\node at (305,109) {$001$};
\node at (357,109) {$002$};
\draw    (376,209.41) -- (390.5,185.41) ;
\draw    (366,209.41) -- (351.5,185.41) ;
\draw    (356,177.41) -- (387,177.41) ;
\node at (379,216.41) {$212$};
\node at (400,185.41) {$220$};
\node at (363,216.41) {$221$};
\node at (342,185.41) {$210$};
\node at (347,177.41) {$201$};
\node at (396,177.41) {$202$};
\draw    (296,209.41) -- (310.5,185.41) ;
\draw    (286,209.41) -- (270.5,185.41) ;
\draw    (275,177.41) -- (306,177.41) ;
\node at (299,216.41) {$121$};
\node at (319.5,185.41) {$102$};
\node at (283,216.41) {$112$};
\node at (261.5,185.41) {$101$};
\node at (266,177.41) {$110$};
\node at (315,177.41) {$120$};
\end{tikzpicture}
\caption{Sierpi\'{n}ski graph $S^3_3$}
\label{fig:S33}
\end{figure}
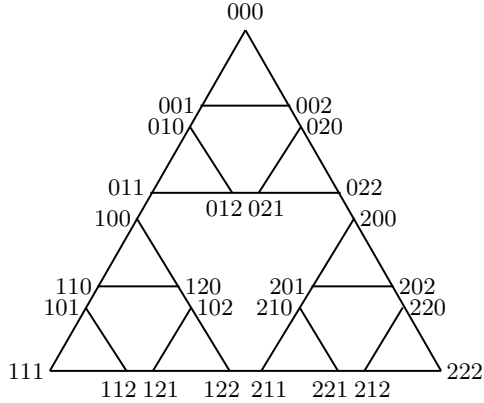

\section{Small Sierpi\'{n}ski Triangle Graphs}

Sierpi\'{n}ski triangle graphs are a variation of the Sierpi\'{n}ski graphs. Before delving into how Sierpi\'{n}ski graphs are modified to get Sierpi\'{n}ski triangle graphs, we define \textbf{primitive vertices} of Sierpi\'{n}ski triangle graphs. We define the \textbf{primitive vertices} of $\widehat{S}^n$ as the extreme vertices of $\widehat{S}^n$, and they are denoted by $\widehat{i}$, where $i \in T$.

In a shortest path between two vertices of $S^n$, $n\geq 2$, a non-terminal move of type~0 is always followed immediately by a move of type~1 and vice versa. For the latter there is only one choice, It can therefore be considered as a {\em forced move} and left out in an {\em accelerated count} of moves. We obtain the {\bf Sierpi\'{n}ski triangle graph} $\widehat{S}^n$, $n\in\mathbb{N}_0$, from Sierpi\'{n}ski graph $S^{n+1}$ by contracting all edges from $E_1^{n+1}$. The end vertices 
$s_1:=\underline{s}ij^{d-1}$ and $s_2:=\underline{s}ji^{d-1}$ of such an edge in $S_p^{n+1}$ with $1<d\leq n+1$ and $\underline{s}=s_{n+1}\ldots s_{d+1}\in T^{n-d+1}$ and $\{i,j\}\in{T\choose 2}$
form a combined state $\underline{s}k$, $k=3-i-j$, i.e.~vertex in $\widehat{S}^n$. The {\bf extreme vertices} $i^{n+1}$, $i\in T$, of $S^{n+1}$ are not incident with a move of type~1, so they carry over to {\bf primitive vertices} $\widehat{i}$ of $\widehat{S}^n$, forming the set $\widehat{T}\subset V(\widehat{S}^n)$. The moves of type~0 in $S^{n+1}$ forms the edge set in $\widehat{S}^n$. In other words, only moves of type~0 in $S^{n+1}$ are counted in $\widehat{S}^n$. More formally,
$$
V(\widehat{S}^n)=\widehat{T}\cup \bigcup_{d=2}^{n+1} T^{n-d+2}
$$
and $V(\widehat{S}^n)\setminus \widehat{T}\cong E_1^{n+1}$, $E(\widehat{S}^n)\cong E_0^{n+1}$, where $\{s,t\}\in E(\widehat{S}^n)$ iff $\{s_a,t_b\}\in E_0^{n+1}$ for some $a,b\in \{1,2\}$.

Clearly, $\widehat{S}^0_3 \cong S^1_3$. Therefore, $|M(\widehat{S}^0_3)|=|M(S^1_3)|=|V(S^1_3)|=|V(\widehat{S}^0_3)|=3$. By definition of proximity and median, it is trivial that $M(\widehat{S}^0_3)= V(\widehat{S}^0_3)=\widehat{T}=\{\widehat{0}, \widehat{1}, \widehat{1}\}$. (Refer to Figure ~\ref{fig:st}.)

Obviously, $M(\widehat{S}^1_3)= \{0,1,2\}$ which comprises the vertices lying on the inner ring, as shown in Figure~\ref{fig:st}. So $|M(\widehat{S}^0_3)|=3$.

\begin{figure}[htbp]
\centering

	\begin{tikzpicture}[scale=0.5]
	\draw (0,0) --(3.5,7);
	\draw (3.5,7) --(7,0);
	\draw (7,0) --(0,0);
	
	\node at (3.5,7.5) {$\widehat{0}$};
	\node at (-0.3,-0.2) {$\widehat{1}$};
	\node at (7.4,-0.2) {$\widehat{2}$};	
	
	\filldraw [fill=red] (0,0) circle (2pt);
        \filldraw [fill=red] (3.5,7) circle (2pt);
        \filldraw [fill=red] (7,0) circle (2pt);
	
	
	\draw (10,0) --(13.5,7);
	\draw (13.5,7) --(17,0);
	\draw (17,0) --(10,0);
	
	\node at (13.5,7.4) {$\widehat{0}$};
	\node at (9.8,-0.2) {$\widehat{1}$};
	\node at (17.2,-0.2) {$\widehat{2}$};
	
	\draw (11.75,3.5) --(13.5,0);
	\draw (15.25,3.5) --(13.5,0);
	\draw (11.75,3.5) --(15.25,3.5);
	
	\node at (11.4,3.5) {$2$};
	\node at (15.6,3.5) {$1$};	
	\node at (13.5,-0.4) {$0$};

        \filldraw [fill=red] (11.75,3.5) circle (2pt);
        \filldraw [fill=red] (15.25,3.5) circle (2pt);
        \filldraw [fill=red] (13.5,0) circle (2pt);
        
\end{tikzpicture}

\caption{Sierpi\'{n}ski triangle graphs $\widehat{S}^0_3$ and $\widehat{S}^1_3$, respectively}
\label{fig:st}
\end{figure}
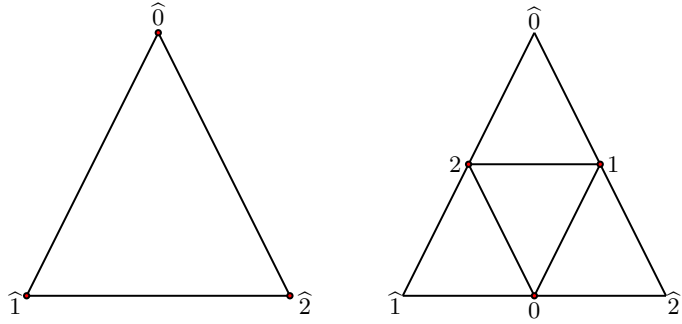

\section{Median of Sierpi\'{n}ski Triangle Graphs}
\subsubsection{Definitions:}
\begin{enumerate}
    \item $\widehat{d}_n(s,t)$ is defined as the distance between any two vertices $s$ and $t$ in the Sierpi\'{n}ski triangle graph $\widehat{S}^n_3$.
    \item $d_n(s,t)$ is defined as the distance between any two vertices $s$ and $t$ in the Sierpi\'{n}ski graph $S^n_3$.
    \item $\widehat{d}_n(s)$ is defined as the total distance of any vertex $s$ from all other vertices in the Sierpi\'{n}ski triangle graph $\widehat{S}^n_3$.
    \item $d_n(s)$ is defined as the total distance of any vertex $s$ from all other vertices in the Sierpi\'{n}ski graph $S^n_3$.
    \item $\widehat{d'}_n(s)$ is defined as the total distance of any vertex $s$ from all other vertices except the primitive vertices in the Sierpi\'{n}ski triangle graph $\widehat{S}^n_3$.
    \item $d'_n(s)$ is defined as the total distance of any vertex $s$ from all other vertices except the extreme vertices and the vertex that forms a non-clique edge with $s$ in the Sierpi\'{n}ski graph $S^n_3$.
\end{enumerate}
Our main goal is to find the median of the Sierpi\'{n}ski triangle graphs.

\begin{quote}
      \begin{theorem} \label{hyp:basic}
    $|M(\widehat{S}^n_3)|=6$ for all $ n \ge 2$.

    \
    
    The median vertices of $\widehat{S}^n_3$ are the vertices formed by contracting edges in $S^{n+1}_3$, whose end vertices are the median of $S^{n+1}_3$.
    
    \
    
    M($\widehat{S}^n_3$)=$\widehat{M}^n_0$:=$\{0,1,2,00,11,22\}$ and $|M(\widehat{S}^n_3)|=6$ for $n \in \mathbb{N}_0$ and \\ $n \geq 2$.
\end{theorem}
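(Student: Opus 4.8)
The plan is to transport the problem to the Sierpi\'nski graph $S^{n+1}_3$, whose median is already known, through the edge contraction defining $\widehat{S}^n_3$. First I would fix the correspondence of the excerpt precisely: every non-primitive vertex of $\widehat{S}^n_3$ is the image of a unique matched pair $\{s_1,s_2\}$ forming an edge of $E_1^{n+1}$, while the primitive vertices $\widehat{i}$ are the images of the extreme vertices $i^{n+1}$. The decisive combinatorial observation is that the six vertices of $\widehat{M}^n_0$ are exactly the contractions of the six type-1 edges whose twelve endpoints are precisely the twelve median vertices of $S^{n+1}_3$ (valid for $n\ge 2$, where $M(S^{n+1}_3)=\{ijk^{n-1}\mid i,j,k\in T,\ j\neq i\neq k\}$). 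Concretely, the states $ij^{n}$ with $i\neq j$ pair off across their level-$(n{+}1)$ type-1 edges and contract to the symbols $0,1,2$ (for instance $\{12^{n},21^{n}\}\mapsto 0$), whereas the states $ijk^{n-1}$ with $i,j,k$ distinct pair off across their level-$n$ type-1 edges and contract to $00,11,22$ (for instance $\{012^{n-1},021^{n-1}\}\mapsto 00$). Checking these twelve memberships and six images is a finite, mechanical verification that simultaneously establishes the descriptive middle assertion of the theorem.

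The analytic core is a distance identity coming from the accelerated count. Since on an $S^{n+1}_3$-geodesic a non-terminal move of type~0 is immediately followed by a forced move of type~1 and vice versa, the two move types strictly alternate, so $\widehat{d}_n(v,w)$ equals the least number of type-0 moves on an $S^{n+1}_3$-geodesic between representatives of $v$ and $w$, i.e.\ essentially one half of $d_{n+1}$ up to a boundary term of size at most $\tfrac12$ determined by whether that geodesic begins and ends with a type-0 or a type-1 move. Summing over all targets, I would split the total distance as $\widehat{d}_n(v)=\widehat{d'}_n(v)+\sum_{i\in T}\widehat{d}_n(v,\widehat{i})$, so that the bulk is carried by $\widehat{d'}_n(v)$ and the three primitive vertices are treated separately. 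This is exactly why the auxiliary quantities are set up as they are: deleting the extreme vertices and the non-clique partner $s_2$ of $s_1$ in the definition of $d'_{n+1}$ matches deleting the primitive vertices in $\widehat{d'}_n$ together with the vanishing of the self-contribution (both $s_1$ and $s_2$ contract to the same vertex $v$), which lets me pass from the $S^{n+1}_3$ total-distance data, whose minimizers are the known median vertices, to $\widehat{d}_n(v)$.

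With the total-distance formula in hand I would invoke the symmetry group $\mathrm{Sym}(T)$, which acts on both graphs by permuting the pegs and leaves $\widehat{M}^n_0$ invariant, splitting it into the two orbits $\{0,1,2\}$ and $\{00,11,22\}$. It therefore suffices to (i) compute $\widehat{d}_n(0)$ and $\widehat{d}_n(00)$ and check $\widehat{d}_n(0)=\widehat{d}_n(00)$, and (ii) show that every vertex outside $\widehat{M}^n_0$ has strictly larger total distance. For both I would exploit the self-similar decomposition of $\widehat{S}^n_3$ into three copies of $\widehat{S}^{n-1}_3$ glued at the contact vertices $0,1,2$: the total distance of a vertex sitting in one copy equals its total distance within that copy plus its aggregated distances to the other two copies, a shortest route into another copy passing through the contact vertex joining them, which yields a recursion in $n$. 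The $n$-independence of the answer then reflects the monotone fact that displacing a facility from a contact vertex deeper into a single sub-copy strictly increases its aggregate distance to the bulk of the other two copies.

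The main obstacle is twofold. The first difficulty is pinning down the exact boundary term of the accelerated count, namely the endpoint information governing the $\pm\tfrac12$ correction, because this correction is precisely what must balance so that the two orbits $\{0,1,2\}$ and $\{00,11,22\}$ tie and yet every deeper vertex strictly loses; a careless half-integer would spuriously break or merge the median. The second, and harder, difficulty is proving strictness uniformly in $n$: I must show not merely that the six candidates attain the minimum but that no seventh vertex ties them, which requires a lower bound on the excess total distance of any non-candidate that does not decay as $n\to\infty$. I expect the self-similar recursion to furnish such a bound, with the base case $n=2$ (and the already-settled case $n=1$, where $M(\widehat{S}^1_3)=\{0,1,2\}$) verified by direct computation.
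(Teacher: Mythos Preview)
Your scaffolding---identifying the six contracted edges, separating off the primitive vertices via the constant $\sum_i\widehat{d}_n(v,\widehat{i})=2^{n+1}$, and transporting the minimisation to $S^{n+1}_3$---matches the paper exactly. Where you diverge is in how the boundary correction is controlled. You propose the coarse relation $\widehat{d}_n(v,w)\approx\tfrac12 d_{n+1}(\cdot,\cdot)\pm\tfrac12$ together with a self-similar recursion for strictness. The paper instead proves a precise identity that averages over \emph{both} representatives of \emph{both} endpoints at once:
\[
\widehat{d}_n(s,t)=\Bigl\lceil\tfrac{1}{8}\bigl(d_{n+1}(s_1,t_1)+d_{n+1}(s_1,t_2)+d_{n+1}(s_2,t_1)+d_{n+1}(s_2,t_2)\bigr)\Bigr\rceil,
\]
shows this four-term sum is always $\equiv 0,4$ or $6\pmod 8$, and then counts, for each fixed $s$, exactly how many $t$ contribute a nonzero ceiling correction. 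For the six candidates this count is \emph{exactly} $\tfrac{3^n-3}{2}$ (all with residue $6$), so $8\widehat{d}'_n(m)=\delta^{(n)}(m)+3^n-3$ where $\delta^{(n)}(s)=d'_{n+1}(s_1)+d'_{n+1}(s_2)$; for every non-candidate the same quantity is at least $3^n-3$, whence $8\widehat{d}'_n(s)\ge\delta^{(n)}(s)+3^n-3>\delta^{(n)}(m)+3^n-3=8\widehat{d}'_n(m)$, the strict middle inequality coming straight from the known median of $S^{n+1}_3$. No recursion is used at all.

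Your plan, by contrast, has a genuine gap at the strictness step. The heuristic ``displacing a facility from a contact vertex deeper into one sub-copy strictly increases its aggregate distance to the other two copies'' is true, but it ignores the compensating \emph{decrease} of total distance within the home copy; the net sign is exactly what is at stake, and nothing in your outline bounds it. Moreover, your single-representative $\tfrac12 d_{n+1}\pm\tfrac12$ formulation cannot by itself show that the two orbits $\{0,1,2\}$ and $\{00,11,22\}$ tie while all deeper vertices lose: that tie is precisely the statement that the aggregate ceiling corrections coincide for the two kinds of candidate, which the paper gets from its residue count and which your boundary-term description is too coarse to deliver. If you want to salvage the recursion route you would need an explicit formula for $\widehat{d}_n(v)$ at a generic vertex in terms of its position in one sub-copy plus the three contact distances, and then a strict monotonicity lemma in that position---work comparable in difficulty to the paper's residue bookkeeping.
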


For the proof we need some preliminaries.
\end{quote}

\begin{theorem}\label{thm:primitive}(Equation 6 in Section 3.1 of ~\cite{hinz-2022})
    For any given vertex $s \in V(\widehat{S}^n_3)$, the sum of distances of any given vertex from the primitive vertices is
        \begin{align*}\widehat{d}_n(s)-\widehat{d'}_n(s)=2^{n+1}\end{align*}
\end{theorem}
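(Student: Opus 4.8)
The plan is to prove the identity by induction on $n$, exploiting the self-similar structure of $\widehat{S}^n_3$. Write $P(s):=\widehat{d}_n(s)-\widehat{d'}_n(s)=\sum_{i\in T}\widehat{d}_n(s,\widehat{i})$ for the sum of distances from $s$ to the three primitive vertices (this equals the stated difference in every case, including when $s$ is itself primitive, since then the $i$ with $\widehat{i}=s$ contributes $0$). The goal is to show $P(s)=2^{n+1}$ for all $s\in V(\widehat{S}^n_3)$. The base case $n=0$ is immediate from Figure~\ref{fig:st}: on the triangle $\widehat{S}^0_3$ every vertex has $P=0+1+1=2=2^{1}$. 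Along the way I would also establish, by the same induction, the auxiliary fact that the distance between any two primitive vertices of $\widehat{S}^m_3$ equals $2^{m}$; this will be needed in the step.

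First I would make the decomposition explicit. Since $\widehat{S}^n_3$ is obtained from $S^{n+1}_3$ by contracting $E_1^{n+1}$, the three subgraphs $G_0,G_1,G_2$ arising from the copies $iS^n_3$ ($i\in T$) of $S^{n+1}_3$ each contract to a copy of $\widehat{S}^{n-1}_3$. In $S^{n+1}_3$ the copies $iS^n_3$ and $jS^n_3$ are joined only by the single depth-$(n{+}1)$ type-1 edge $\{ij^{n},ji^{n}\}$, which is contracted to the central vertex $m_{ij}=3-i-j$; hence $G_i$ and $G_j$ meet in exactly the one vertex $m_{ij}$ and in no edge. Consequently the three corners of $G_i\cong\widehat{S}^{n-1}_3$ are precisely $\widehat{i}$ together with the meeting vertices $m_{ij},m_{ik}$, and each $m_{ij}$ is a cut vertex: every walk leaving $G_i$ must pass through $m_{ij}$ or $m_{ik}$.

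For the induction step take $s\in G_0$ (the other cases are symmetric). The gateway property yields, for the two external primitive vertices,
\begin{align*}
\widehat{d}_n(s,\widehat{1})&=\widehat{d}_n(s,m_{01})+2^{\,n-1},\qquad
\widehat{d}_n(s,\widehat{2})=\widehat{d}_n(s,m_{02})+2^{\,n-1},
\end{align*}
since a shortest path to $\widehat{1}$ must enter $G_1$ through $m_{01}$ and then cross $G_1$ from corner $m_{01}$ to corner $\widehat{1}$, a distance of $2^{n-1}$ by the auxiliary fact applied to $G_1\cong\widehat{S}^{n-1}_3$, while the alternative route through $m_{02}$ and $G_2$ costs an extra $2^{n-1}$ and is never shorter. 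Regrouping,
\begin{align*}
P(s)&=\bigl(\widehat{d}_n(s,\widehat{0})+\widehat{d}_n(s,m_{01})+\widehat{d}_n(s,m_{02})\bigr)+2\cdot 2^{\,n-1}.
\end{align*}
The bracketed terms are exactly the sum of distances from $s$ to the three corners of $G_0$, which by the induction hypothesis inside $G_0\cong\widehat{S}^{n-1}_3$ equals $2^{n}$ (the cut-vertex property guarantees these intra-copy distances coincide with the distances in $\widehat{S}^n_3$). Hence $P(s)=2^{n}+2^{n}=2^{n+1}$.

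I expect the main obstacle to be the bookkeeping in the second paragraph: pinning the decomposition down purely from the state labelling and verifying rigorously that $G_i\cap G_j=\{m_{ij}\}$ with no connecting edge, because this cut-vertex property is exactly what legitimises both the routing identities for $\widehat{d}_n(s,\widehat{1}),\widehat{d}_n(s,\widehat{2})$ and the claim that distances measured inside $G_0$ agree with distances in the whole graph. Once the decomposition and the auxiliary corner-to-corner distance $2^{m}$ are in place, the arithmetic of the step is routine.
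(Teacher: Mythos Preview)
Your inductive argument is correct. The paper does not actually prove this statement: it is quoted verbatim as Equation~6 from \cite{hinz-2022} and used as a black box, so there is no ``paper's own proof'' to compare against. What you have supplied is a clean, self-contained proof that the cited reference presumably contains in some form; the decomposition of $\widehat{S}^n_3$ into three copies of $\widehat{S}^{n-1}_3$ glued at cut vertices, together with the corner-to-corner distance $2^{m}$, is exactly the standard machinery for metric identities on Sierpi\'nski triangle graphs.

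One small point worth making explicit in your write-up: when you assert $\widehat{d}_n(s,\widehat{1})=\widehat{d}_n(s,m_{01})+2^{n-1}$, the inequality ruling out the detour through $m_{02}$ follows from the triangle inequality inside $G_0$, namely $\widehat{d}_n(s,m_{01})\le \widehat{d}_n(s,m_{02})+2^{n-1}$, since $m_{01}$ and $m_{02}$ are corners of $G_0\cong\widehat{S}^{n-1}_3$ at distance $2^{n-1}$. You allude to this but do not spell it out; doing so closes the only place a reader might pause. The verification that intra-$G_0$ distances agree with ambient distances (your final paragraph) is likewise correct, since any excursion out of $G_0$ and back costs at least $2^{n}$ between the two gateways versus $2^{n-1}$ inside.
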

\begin{theorem}\label{thm:extreme}(Corollary 4.7 in ~\cite{hinz-2018})
    For any given vertex $s \in V(S^n_3)$, the sum of distances of any given vertex from the extreme vertices is    \begin{align*}d_n(s)-d'_n(s)=2(2^n-1)\end{align*}
\end{theorem}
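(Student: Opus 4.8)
The plan is to prove directly the quantity described in the statement, namely that for every $s \in V(S^n_3)$ the sum of distances from $s$ to the three extreme vertices $0^n, 1^n, 2^n$ equals $2(2^n-1)$; this sum is exactly $d_n(s)-d'_n(s)$. Writing $\sigma_n(s) := d_n(s,0^n)+d_n(s,1^n)+d_n(s,2^n)$, the target is the assertion that $\sigma_n(s)$ is the \emph{constant} $2(2^n-1)$, independent of $s$. I would argue by induction on $n$, exploiting the self-similar decomposition of $S^n_3$ into three copies $S_0, S_1, S_2$ of $S^{n-1}_3$, where $S_i$ collects all states with top symbol $s_n=i$, and where $S_i$ and $S_j$ are joined by the single bridge edge $\{ij^{n-1}, ji^{n-1}\}$. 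The extreme vertex $c^n$ lies in $S_c$ and carries the internal label $c^{n-1}$ there.

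For the base case, $S^1_3$ is $K_3$ and a direct check gives $\sigma_1(s)=0+1+1=2=2(2^1-1)$ at each vertex (with $S^0_3$ the trivial single-vertex case, $\sigma_0=0$). For the inductive step, fix $s=i\underline{t}$ with $i=s_n$ and $\underline{t}\in T^{n-1}$; by the $3$-fold peg symmetry of $S^n_3$ take $i=0$. The crucial input is the standard structural fact for Sierpi\'{n}ski graphs that any path between two distinct subcopies must traverse the corresponding bridge edge, so a geodesic from $s$ to an extreme vertex in another subcopy splits additively at that edge. Concretely, the geodesic to the home extreme vertex $0^n$ stays inside $S_0$, giving $d_n(0\underline{t},0^n)=d_{n-1}(\underline{t},0^{n-1})$; the geodesic to $1^n$ leaves $S_0$ at $01^{n-1}$, crosses to $10^{n-1}$, and continues inside $S_1$ to $1^{n-1}$, giving $d_n(0\underline{t},1^n)=d_{n-1}(\underline{t},1^{n-1})+1+d_{n-1}(0^{n-1},1^{n-1})$, and symmetrically for $2^n$. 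Invoking the classical Tower-of-Hanoi value $d_{n-1}(i^{n-1},j^{n-1})=2^{n-1}-1$ for the distance between two distinct extreme vertices and summing the three contributions yields $\sigma_n(0\underline{t})=\sigma_{n-1}(\underline{t})+2+2(2^{n-1}-1)=\sigma_{n-1}(\underline{t})+2^n$.

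Since this recursion is independent of which subcopy $s$ lies in, the inductive hypothesis that $\sigma_{n-1}\equiv 2(2^{n-1}-1)$ propagates to $\sigma_n\equiv 2(2^{n-1}-1)+2^n=2(2^n-1)$, completing the induction; equivalently, telescoping gives $\sigma_n=\sum_{k=1}^{n}2^k=2(2^n-1)$. The main obstacle is the geodesic-decomposition claim used in the inductive step: one must verify that a shortest $s$-to-$c^n$ path uses exactly one bridge edge and that no shorter route detours, for instance, through the third subcopy. This is the essential cut-edge lemma for Sierpi\'{n}ski graphs — the bridges are the only connections between subcopies, forcing the additive split — and it is where the weight of the argument lies; once it is granted, the remainder is bookkeeping and a geometric sum.
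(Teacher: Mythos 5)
Your proof is correct, but note that the paper itself contains no proof of this statement: it is quoted verbatim as Corollary~4.7 of~\cite{hinz-2018}, so your induction is a self-contained derivation of an imported result rather than an alternative to an argument in the text. Your recursion is sound: writing $\sigma_n(s)=\sum_{c\in T}d_n(s,c^n)$ and $s=0\underline{t}$, you get $d_n(0\underline{t},0^n)=d_{n-1}(\underline{t},0^{n-1})$ and $d_n(0\underline{t},j^n)=d_{n-1}(\underline{t},j^{n-1})+1+(2^{n-1}-1)$ for $j\in\{1,2\}$, hence $\sigma_n(0\underline{t})=\sigma_{n-1}(\underline{t})+2^n$, and the geometric sum gives $2(2^n-1)$. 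Two remarks on the point you flag as carrying the weight of the argument. First, do not call it a ``cut-edge lemma'': the bridge edges are \emph{not} cut edges of $S^n_3$, since one can route around through the third subcopy, so the additive split is not forced by connectivity alone; however, the gap closes in one line via the diameter $2^{n-1}-1$ of each subcopy --- a detour from $S_0$ to $1^n$ through $S_2$ costs at least $2+2(2^{n-1}-1)=2^n$, while the direct route costs at most $(2^{n-1}-1)+2^{n-1}=2^n-1$, and a geodesic to the home extreme vertex never leaves $S_0$ because any excursion costs at least $3+3(2^{n-1}-1)>2^{n-1}-1$. Second, what you actually prove is the constant-sum statement $\sigma_n(s)=2(2^n-1)$, which is the content of Corollary~4.7 and plainly the intended meaning of the theorem; taken literally, though, the paper's definition of $d'_n(s)$ also excludes the vertex forming a non-clique edge with $s$, so for a vertex $s$ possessing such a partner the difference $d_n(s)-d'_n(s)$ would be $\sigma_n(s)+1$. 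That discrepancy is a bookkeeping sloppiness of the paper, not a defect of your argument, but it is worth being aware of when the theorem is applied later (e.g.\ in Remark~\ref{thm:r6}).
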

\begin{theorem}\label{thm:2vert}
    For any two given vertices $s,t \in V(\widehat{S}^n_3)$,    
    \begin{equation}\label{ref:eq1}
    \widehat{d}_n(s,t)=\left\lceil \frac{d_{n+1}(s_1,t_1)+d_{n+1}(s_1,t_2)+d_{n+1}(s_2,t_1)+d_{n+1}(s_2,t_2)}{8} \right\rceil
    \end{equation}
\end{theorem}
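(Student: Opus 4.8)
The plan is to read $\widehat d_n(s,t)$ as the minimum number of type-$0$ moves needed in $S^{n+1}$ to pass from the contracted class $\{s_1,s_2\}$ to the class $\{t_1,t_2\}$, and then to sandwich each of the four quantities $d_{n+1}(s_a,t_b)$ between $2k-1$ and $2k+1$, where $k:=\widehat d_n(s,t)$. Throughout I assume $s\neq t$, so $k\ge 1$ (the degenerate case $s=t$ is excluded). Recall that $s$ comes from contracting the type-$1$ edge $\{s_1,s_2\}\in E_1^{n+1}$, so $s_1,s_2$ are type-$1$ adjacent with $s_1=s_2$ exactly when $s$ is primitive, and likewise for $t$.

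First I would prove the lower bound $d_{n+1}(s_a,t_b)\ge 2k-1$ for every $a,b\in\{1,2\}$. Contracting the type-$1$ edges sends any $s_a$--$t_b$ path in $S^{n+1}$ to an $s$--$t$ walk in $\widehat S^n$ whose length equals the number of type-$0$ moves on the path, since $E(\widehat S^n)\cong E_0^{n+1}$ while each type-$1$ move collapses to a stationary step. Hence every such path carries at least $k$ type-$0$ moves. Applying this to a geodesic and invoking the alternation property of shortest paths in $S^{n+1}$ (here $n+1\ge 2$), the numbers $p$ of type-$0$ and $q$ of type-$1$ moves obey $|p-q|\le 1$ with $p\ge k$, whence $d_{n+1}(s_a,t_b)=p+q\ge 2p-1\ge 2k-1$.

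Next I would lift a geodesic $s=v_0,v_1,\dots,v_k=t$ of $\widehat S^n$ to get matching upper bounds. Each edge $\{v_{i-1},v_i\}$ is the image of a unique type-$0$ edge of $S^{n+1}$ under $E(\widehat S^n)\cong E_0^{n+1}$; chaining these $k$ type-$0$ edges and bridging each of the $k-1$ interior junctions by the forced type-$1$ move between the two representatives of $v_i$ produces a walk of length at most $2k-1$ from the representative $s_{a_0}\in\{s_1,s_2\}$ at which the first edge starts to the representative $t_{b_0}\in\{t_1,t_2\}$ at which the last edge ends. Thus $d_{n+1}(s_{a_0},t_{b_0})\le 2k-1$, which with the lower bound forces equality. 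Prepending the type-$1$ move $s_{\overline{a_0}}\!\to\! s_{a_0}$ and/or appending $t_{b_0}\!\to\! t_{\overline{b_0}}$ yields $d_{n+1}(s_{\overline{a_0}},t_{b_0})\le 2k$, $d_{n+1}(s_{a_0},t_{\overline{b_0}})\le 2k$, and $d_{n+1}(s_{\overline{a_0}},t_{\overline{b_0}})\le 2k+1$; these four pairs are precisely the four $(s_a,t_b)$, and when an endpoint is primitive the corresponding flip is vacuous yet the bound still holds.

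Summing the four estimates then gives
\begin{equation*}
8k-4 \;=\; 4(2k-1) \;\le\; \sum_{a,b\in\{1,2\}} d_{n+1}(s_a,t_b) \;\le\; (2k-1)+2k+2k+(2k+1) \;=\; 8k ,
\end{equation*}
so the averaged quantity lies in $(k-1,k]$ and its ceiling equals $k=\widehat d_n(s,t)$, which is exactly \eqref{ref:eq1}. I expect the main obstacle to be the lifting step: one must justify that a $\widehat S^n$-geodesic lifts to an $S^{n+1}$-walk in which each interior junction costs exactly one forced type-$1$ move—so that the aligned pair realizes $2k-1$ rather than something longer—and that the endpoints selected by the first and last type-$0$ edges, together with their flips, sweep out all four pairs $(s_a,t_b)$. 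Checking this, together with the coincident-representative (primitive) cases, is the delicate part; the lower bound and the closing ceiling arithmetic are then routine.
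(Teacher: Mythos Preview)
Your argument is correct and takes a genuinely different route from the paper's proof. The paper proceeds by case analysis: it fixes (up to relabelling) the pair $(s_2,t_1)$ whose geodesic in $S^{n+1}$ avoids the other two representatives, reads off $\widehat d_n(s,t)=(d_{n+1}(s_2,t_1)+1)/2$ from the alternating structure, and then splits into four exhaustive cases according to which of the remaining three geodesics pass through $s_1,s_2,t_1,t_2$; in each case it writes down an exact expression for $\widehat d_n(s,t)$ in terms of the corresponding $d_{n+1}(s_a,t_b)$ and verifies that the sum lands on $8k$, $8k-2$, or $8k-4$, so the ceiling formula holds. Your sandwich argument bypasses the enumeration entirely: contraction plus alternation gives the uniform lower bound $d_{n+1}(s_a,t_b)\ge 2k-1$, one lift of a $\widehat S^n$-geodesic realises some pair at exactly $2k-1$, the endpoint flips cap the other three at $2k,2k,2k+1$, and the interval $[8k-4,8k]$ forces the ceiling to be $k$. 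Your approach is cleaner and more conceptual (and would port directly to base $p>3$), but the paper's case split yields the exact residue of $\delta^{(n)}(s,t)$ modulo~$8$ in each geometric configuration, and this finer information is precisely what the subsequent lemmas on $\delta^{(n)}(s,t)\bmod 8$ exploit; your proof establishes the theorem but would not by itself supply those residues. One small caveat: your invocation of alternation needs $n+1\ge 2$, so the case $n=0$ (where all vertices are primitive and the formula is immediate) should be handled separately.
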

\begin{proof}
    Let us assume that the shortest $\{s_2, t_1\}$ path in $S^{n+1}_3$ does not pass through $s_1$ or $t_2$. We know that in a path, between any two vertices in $S^{n+1}_3$, clique and non-clique edges occur alternately.
    The shortest $\{s_2, t_1\}$ path not passing through $s_1$ and $t_2$ in $S^{n+1}_3$ starts with a clique edge and ends with a clique edge. The shortest $\{s, t\}$ in $\widehat{S}^n_3$ has the same number of edges as the number of clique edges in the shortest $\{s_2, t_1\}$ path in $S^{n+1}_3$, that is
    \begin{equation}\label{ref:eq2}
    \widehat{d}_n(s,t)=\frac{d_{n+1}(s_2,t_1)+1}{2}
    \end{equation}
    \\
    For the rest of the paths, there are 4 exhaustive cases:
    \begin{enumerate}
        \item[\mylabel{itm:1}{Case 1}:] The $\{s_1, t_1\}$ path passes through $s_2$, the $\{s_2, t_2\}$ path passes through $t_1$, and the $\{s_1, t_2\}$ path passes through both $s_2$ and $t_1$. (For example, consider the case $s_1=002$, $s_2=020$, $t_1=022$, and $t_2=200$ in Figure \ref{fig:S331}, where the shortest path for any pair of vertices follow the red line.)\\
        Therefore, we get:
        \begin{equation}\label{ref:eq3}
        \widehat{d}_n(s,t)=\frac{d_{n+1}(s_1,t_1)}{2}
        \end{equation}
        \begin{equation}\label{ref:eq4}
        \widehat{d}_n(s,t)=\frac{d_{n+1}(s_2,t_2)}{2}
        \end{equation}
        \begin{equation}\label{ref:eq5}
        \widehat{d}_n(s,t)=\frac{d_{n+1}(s_1,t_2)-1}{2}
        \end{equation}
        \\
        Combining equations \ref{ref:eq3}, \ref{ref:eq4}, and \ref{ref:eq5}, we get 
        \begin{equation}\label{ref:eq6}
        \widehat{d}_n(s,t)=\frac{1}{8}[d_{n+1}(s_1,t_1)+d_{n+1}(s_1,t_2)+d_{n+1}(s_2,t_1)+d_{n+1}(s_2,t_2)]
        \end{equation}

        \tikzset{every picture/.style={line width=0.75pt}} 
    \begin{figure}
    \centering
    \begin{tikzpicture}[x=1pt,y=1pt,yscale=-1,xscale=1]
    
    \draw    (331,80.41) -- (405,209.41) ;
    \node at (414,209.41) {$222$}; 
    \draw    (331,80.41) -- (257,209.41) ;
    \node at (331,73.41) {$000$}; 
    \draw    (405,209.41) -- (257,209.41) ;
    \node at (248,209.41) {$111$};
    \draw    (337,209.41) -- (372.24,151.5) ;
    \draw    (325,209.41) -- (289.76,151.5) ;
    \draw    (295,142) -- (367,142) ;
    \node at (340,216.41) {$211$};
    \node at (322,216.41) {$122$};
    \node at (388.24,151.5) {$200 (t_2)$};
    \node at (280.76,151.5) {$100$};
    \node at (286,140) {$011$};
    \node at (383,140) {$022 (t_1)$};
    \draw    (326,142) -- (310,117) ;
    \draw    (336,142) -- (352,117) ;
    \draw    (314,109) -- (348,109) ;
    \node at (323,148) {$012$};
    \node at (301,117) {$010$};
    \node at (339,148) {$021$};
    \node at (369,117) {$020 (s_2)$};
    \node at (305,109) {$001$};
    \node at (365,109) {$002 (s_1)$};
    \draw    (376,209.41) -- (390.5,185.41) ;
    \draw    (366,209.41) -- (351.5,185.41) ;
    \draw    (356,177.41) -- (387,177.41) ;
    \node at (379,216.41) {$212$};
    \node at (400,185.41) {$220$};
    \node at (363,216.41) {$221$};
    \node at (342,185.41) {$210$};
    \node at (347,177.41) {$201$};
    \node at (396,177.41) {$202$};
    \draw    (296,209.41) -- (310.5,185.41) ;
    \draw    (286,209.41) -- (270.5,185.41) ;
    \draw    (275,177.41) -- (306,177.41) ;
    \node at (299,216.41) {$121$};
    \node at (319.5,185.41) {$102$};
    \node at (283,216.41) {$112$};
    \node at (261.5,185.41) {$101$};
    \node at (266,177.41) {$110$};
    \node at (315,177.41) {$120$};
    \draw[red]   (348,109) -- (372.24,151.5) ;
    \end{tikzpicture}
    \caption{Sierpi\'{n}ski graph $S^3_3$}
    \label{fig:S331}
    \end{figure}
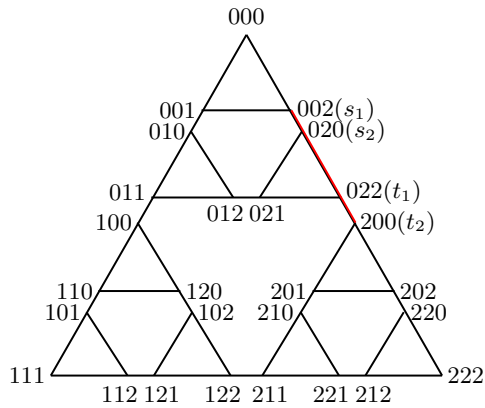
            
        \item[\mylabel{itm:2}{Case 2}:] The $\{s_1, t_2\}$ path does not pass through $s_2$ or $t_1$, the $\{s_2, t_2\}$ path passes through $t_1$ (or $s_1$), and the $\{s_1, t_1\}$ path passes through $t_2$ (or $s_2$). (For example, consider the case $s_1=012$, $s_2=021$, $t_1=211$, and $t_2=122$ in Figure \ref{fig:S332}, where the shortest path for each pair of vertices follows the red line.)
        
        Therefore, we get:
        \begin{equation}\label{ref:eq7}
        \widehat{d}_n(s,t)=\frac{d_{n+1}(s_1,t_1)}{2}
        \end{equation}
        \begin{equation}\label{ref:eq8}
        \widehat{d}_n(s,t)=\frac{d_{n+1}(s_2,t_2)}{2}
        \end{equation}
        \begin{equation}\label{ref:eq9}
        \widehat{d}_n(s,t)=\frac{d_{n+1}(s_1,t_2)+1}{2}
        \end{equation}
        \\
        Combining equations \ref{ref:eq7}, \ref{ref:eq8}, and \ref{ref:eq9}, we get 
        \begin{align*}
        \widehat{d}_n(s,t)=\frac{1}{8}[d_{n+1}(s_1,t_1)+d_{n+1}(s_1,t_2)+d_{n+1}(s_2,t_1)+d_{n+1}(s_2,t_2)+2]
        \\    
        \\
        =\left\lceil \frac{d_{n+1}(s_1,t_1)+d_{n+1}(s_1,t_2)+d_{n+1}(s_2,t_1)+d_{n+1}(s_2,t_2)}{8} \right\rceil
        \end{align*}

        \tikzset{every picture/.style={line width=0.75pt}} 
    \begin{figure}
    \centering
    \begin{tikzpicture}[x=1pt,y=1pt,yscale=-1,xscale=1]
    
    \draw    (331,80.41) -- (405,209.41) ;
    \node at (414,209.41) {$222$}; 
    \draw    (331,80.41) -- (257,209.41) ;
    \node at (331,73.41) {$000$}; 
    \draw    (405,209.41) -- (257,209.41) ;
    \node at (248,209.41) {$111$};
    \draw    (337,209.41) -- (372.24,151.5) ;
    \draw    (325,209.41) -- (289.76,151.5) ;
    \draw    (295,142) -- (367,142) ;
    \node at (340,216.41) {$211$};
    \node at (340,224.41) {$(t_1)$};
    \node at (322,216.41) {$122$};
    \node at (322,224.41) {$(t_2)$};
    \node at (381.24,151.5) {$200$};
    \node at (280.76,151.5) {$100$};
    \node at (286,140) {$011$};
    \node at (376,140) {$022$};
    \draw    (326,142) -- (310,117) ;
    \draw    (336,142) -- (352,117) ;
    \draw    (314,109) -- (348,109) ;
    \node at (323,148) {$012$};
    \node at (323,156) {$(s_1)$};
    \node at (301,117) {$010$};
    \node at (339,148) {$021$};
    \node at (339,156) {$(s_2)$};
    \node at (361,117) {$020$};
    \node at (305,109) {$001$};
    \node at (357,109) {$002$};
    \draw    (376,209.41) -- (390.5,185.41) ;
    \draw    (366,209.41) -- (351.5,185.41) ;
    \draw    (356,177.41) -- (387,177.41) ;
    \node at (379,216.41) {$212$};
    \node at (400,185.41) {$220$};
    \node at (363,216.41) {$221$};
    \node at (342,185.41) {$210$};
    \node at (347,177.41) {$201$};
    \node at (396,177.41) {$202$};
    \draw    (296,209.41) -- (310.5,185.41) ;
    \draw    (286,209.41) -- (270.5,185.41) ;
    \draw    (275,177.41) -- (306,177.41) ;
    \node at (299,216.41) {$121$};
    \node at (319.5,185.41) {$102$};
    \node at (283,216.41) {$112$};
    \node at (261.5,185.41) {$101$};
    \node at (266,177.41) {$110$};
    \node at (315,177.41) {$120$};
    \draw[red]   (372.24,151.5) -- (337,209.41) ;
    \draw[red]   (325, 209.41) -- (337,209.41) ;
    \draw[red]   (289.76,151.5) -- (325,209.41) ;
    \draw[red]   (289.76,151.5) -- (295,142) ;
    \draw[red]   (372.24,151.5) -- (367,142) ;
    \draw[red]   (336,142) -- (367,142) ;
    \draw[red]   (326,142) -- (295,142) ;
    \end{tikzpicture}
    \caption{Sierpi\'{n}ski graph $S^3_3$}
    \label{fig:S332}
    \end{figure}
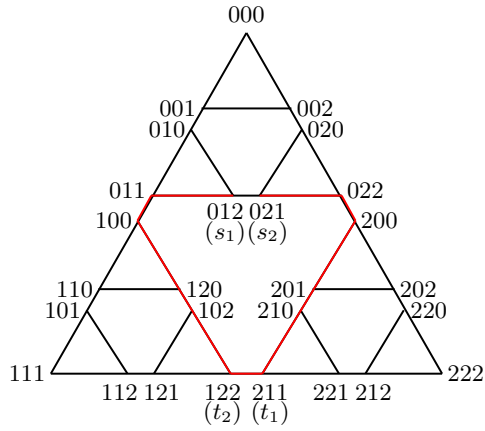
    
        \item[\mylabel{itm:3}{Case 3}:] Here, we have $2$ subcases, which are similar but different in the way they are represented. 
        \begin{enumerate}
        \item[(3a)] The $\{s_2, t_2\}$ path does not pass through $s_1$ or $t_1$, the $\{s_1, t_1\}$ path passes through $s_2$, and the $\{s_1, t_2\}$ path passes through $s_2$. (For example, consider the case $s_1=002$, $s_2=020$, $t_1=212$, and $t_2=221$ in Figure \ref{fig:S333}, where the shortest path between any pair of vertices follows a path along the red line.)\\
        Therefore, we get:
        \begin{equation}\label{ref:eq10}
        \widehat{d}_n(s,t)=\frac{d_{n+1}(s_1,t_1)}{2}
        \end{equation}
        \begin{equation}\label{ref:eq11}
        \widehat{d}_n(s,t)=\frac{d_{n+1}(s_2,t_2)+1}{2}
        \end{equation}
        \begin{equation}\label{ref:eq12}
        \widehat{d}_n(s,t)=\frac{d_{n+1}(s_1,t_2)}{2}
        \end{equation}
        
        Combining equations \ref{ref:eq10}, \ref{ref:eq11}, and \ref{ref:eq12}, we get 
        \begin{align*}
        \widehat{d}_n(s,t)=\frac{1}{8}[d_{n+1}(s_1,t_1)+d_{n+1}(s_1,t_2)+d_{n+1}(s_2,t_1)+d_{n+1}(s_2,t_2)+2]
        \\
        \\
        =\left\lceil \frac{d_{n+1}(s_1,t_1)+d_{n+1}(s_1,t_2)+d_{n+1}(s_2,t_1)+d_{n+1}(s_2,t_2)}{8} \right\rceil
        \end{align*}

        \tikzset{every picture/.style={line width=0.75pt}} 
    \begin{figure}
    \centering
    \begin{tikzpicture}[x=1pt,y=1pt,yscale=-1,xscale=1]
    
    \draw    (331,80.41) -- (405,209.41) ;
    \node at (414,209.41) {$222$}; 
    \draw    (331,80.41) -- (257,209.41) ;
    \node at (331,73.41) {$000$}; 
    \draw    (405,209.41) -- (257,209.41) ;
    \node at (248,209.41) {$111$};
    \draw    (337,209.41) -- (372.24,151.5) ;
    \draw    (325,209.41) -- (289.76,151.5) ;
    \draw    (295,142) -- (367,142) ;
    \node at (340,216.41) {$211$};
    \node at (322,216.41) {$122$};
    \node at (381.24,151.5) {$200$};
    \node at (280.76,151.5) {$100$};
    \node at (286,140) {$011$};
    \node at (376,140) {$022$};
    \draw    (326,142) -- (310,117) ;
    \draw    (336,142) -- (352,117) ;
    \draw    (314,109) -- (348,109) ;
    \node at (323,148) {$012$};
    \node at (301,117) {$010$};
    \node at (339,148) {$021$};
    \node at (369,117) {$020 (s_2)$};
    \node at (305,109) {$001$};
    \node at (365,109) {$002 (s_1)$};
    \draw    (376,209.41) -- (390.5,185.41) ;
    \draw    (366,209.41) -- (351.5,185.41) ;
    \draw    (356,177.41) -- (387,177.41) ;
    \node at (379,216.41) {$212$};
    \node at (379,224.41) {$(t_1)$};
    \node at (400,185.41) {$220$};
    \node at (363,216.41) {$221$};
    \node at (363,224.41) {$(t_2)$};
    \node at (342,185.41) {$210$};
    \node at (347,177.41) {$201$};
    \node at (396,177.41) {$202$};
    \draw    (296,209.41) -- (310.5,185.41) ;
    \draw    (286,209.41) -- (270.5,185.41) ;
    \draw    (275,177.41) -- (306,177.41) ;
    \node at (299,216.41) {$121$};
    \node at (319.5,185.41) {$102$};
    \node at (283,216.41) {$112$};
    \node at (261.5,185.41) {$101$};
    \node at (266,177.41) {$110$};
    \node at (315,177.41) {$120$};
    \draw[red]    (348,109) -- (372.24,151.5);
    \draw[red]    (390.5,185.41) -- (376,209.41);
    \draw[red]    (351.5,185.41) -- (366,209.41);
    \draw[red]    (390.5,185.41) -- (372.24,151.5);
    \draw[red]    (351.5,185.41) -- (372.24,151.5);
    \end{tikzpicture}
    \caption{Sierpi\'{n}ski graph $S^3_3$}
    \label{fig:S333}
    \end{figure}
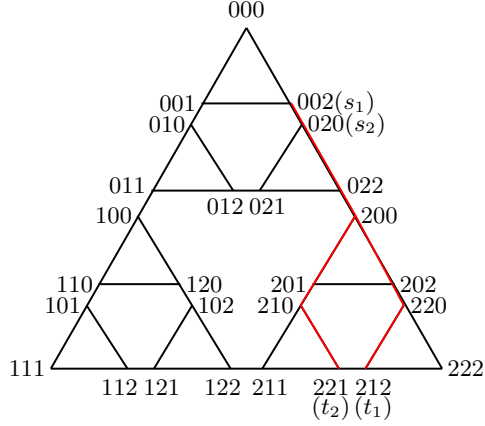

        \item[(3b)] The $\{s_1, t_1\}$ path does not pass through $s_2$ or $t_2$, the $\{s_2, t_2\}$ path passes through $t_1$, and the $\{s_1, t_2\}$ path passes through $t_1$. (For example, consider the case $s_1=002$, $s_2=020$, $t_1=212$, and $t_2=221$ in Figure \ref{fig:S333}, where the shortest path between any pair of vertices follows a path along the red line.)\\
        Therefore, we get:
        \begin{equation}\label{ref:eq13}
        \widehat{d}_n(s,t)=\frac{d_{n+1}(s_1,t_1)+1}{2}
        \end{equation}
        \begin{equation}\label{ref:eq14}
        \widehat{d}_n(s,t)=\frac{d_{n+1}(s_2,t_2)}{2}
        \end{equation}
        \begin{equation}\label{ref:eq15}
        \widehat{d}_n(s,t)=\frac{d_{n+1}(s_1,t_2)}{2}
        \end{equation}
        
        Combining equations \ref{ref:eq13}, \ref{ref:eq14}, and \ref{ref:eq15}, we get 
        \begin{align*}
        \widehat{d}_n(s,t)=\frac{1}{8}[d_{n+1}(s_1,t_1)+d_{n+1}(s_1,t_2)+d_{n+1}(s_2,t_1)+d_{n+1}(s_2,t_2)+2]
        \\
        \\
        =\left\lceil \frac{d_{n+1}(s_1,t_1)+d_{n+1}(s_1,t_2)+d_{n+1}(s_2,t_1)+d_{n+1}(s_2,t_2)}{8} \right\rceil
        \end{align*}
        \end{enumerate}
        From here on, we will be referring to any pair of vertices $(s_1,s_2)$ and $(t_1,t_2)$ adhering to cases 3a or 3b as \ref{itm:3}.
    
        \item[\mylabel{itm:4}{Case 4}:] The paths $\{s_2, t_2\}$, $\{s_1, t_1\}$, and $\{s_1, t_2\}$ do not encounter the remaining 2 vertices. (For example, consider the case $s_1=002$, $s_2=020$, $t_1=112$, and $t_2=121$ in Figure \ref{fig:S33})
        Therefore, we get:
        \begin{equation}\label{ref:eq16}
        \widehat{d}_n(s,t)=\frac{d_{n+1}(s_1,t_1)+1}{2}
        \end{equation}
        \begin{equation}\label{ref:eq17}
        \widehat{d}_n(s,t)=\frac{d_{n+1}(s_2,t_2)+1}{2}
        \end{equation}
        \begin{equation}\label{ref:eq18}
        \widehat{d}_n(s,t)=\frac{d_{n+1}(s_1,t_2)+1}{2}
        \end{equation}
        
        Combining equations \ref{ref:eq16}, \ref{ref:eq17}, and \ref{ref:eq18}, we get 
        \begin{align*}
        \widehat{d}_n(s,t)=\frac{1}{8}[d_{n+1}(s_1,t_1)+d_{n+1}(s_1,t_2)+d_{n+1}(s_2,t_1)+d_{n+1}(s_2,t_2)+4]
    \\
    \\
        =\left\lceil \frac{d_{n+1}(s_1,t_1)+d_{n+1}(s_1,t_2)+d_{n+1}(s_2,t_1)+d_{n+1}(s_2,t_2)}{8} \right\rceil
        \end{align*}
    \end{enumerate}
    Thus, from these four exhaustive cases, we can safely conclude that equation \ref{ref:eq1} holds:
    \begin{align*}
    \widehat{d}_n(s,t)=\left\lceil \frac{d_{n+1}(s_1,t_1)+d_{n+1}(s_1,t_2)+d_{n+1}(s_2,t_1)+d_{n+1}(s_2,t_2)}{8} \right\rceil
    \end{align*}
    \tikzset{every picture/.style={line width=0.75pt}} 
    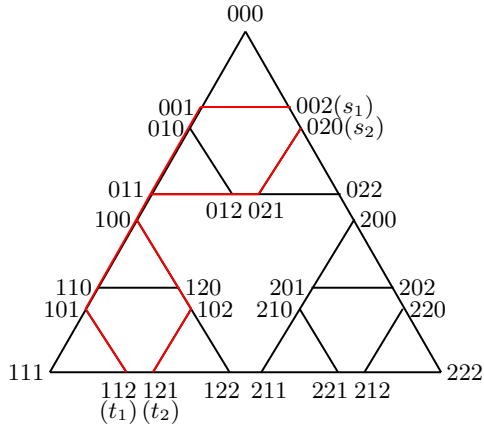
\begin{figure}
    \centering
    \begin{tikzpicture}[x=1pt,y=1pt,yscale=-1,xscale=1]
    
    \draw    (331,80.41) -- (405,209.41) ;
    \node at (414,209.41) {$222$}; 
    \draw    (331,80.41) -- (257,209.41) ;
    \node at (331,73.41) {$000$}; 
    \draw    (405,209.41) -- (257,209.41) ;
    \node at (248,209.41) {$111$};
    \draw    (337,209.41) -- (372.24,151.5) ;
    \draw    (325,209.41) -- (289.76,151.5) ;
    \draw    (295,142) -- (367,142) ;
    \node at (340,216.41) {$211$};
    \node at (322,216.41) {$122$};
    \node at (381.24,151.5) {$200$};
    \node at (280.76,151.5) {$100$};
    \node at (286,140) {$011$};
    \node at (376,140) {$022$};
    \draw    (326,142) -- (310,117) ;
    \draw    (336,142) -- (352,117) ;
    \draw    (314,109) -- (348,109) ;
    \node at (323,148) {$012$};
    \node at (301,117) {$010$};
    \node at (339,148) {$021$};
    \node at (369,117) {$020 (s_2)$};
    \node at (305,109) {$001$};
    \node at (365,109) {$002 (s_1)$};
    \draw    (376,209.41) -- (390.5,185.41) ;
    \draw    (366,209.41) -- (351.5,185.41) ;
    \draw    (356,177.41) -- (387,177.41) ;
    \node at (379,216.41) {$212$};
    \node at (400,185.41) {$220$};
    \node at (363,216.41) {$221$};
    \node at (342,185.41) {$210$};
    \node at (347,177.41) {$201$};
    \node at (396,177.41) {$202$};
    \draw    (296,209.41) -- (310.5,185.41) ;
    \draw    (286,209.41) -- (270.5,185.41) ;
    \draw    (275,177.41) -- (306,177.41) ;
    \node at (299,216.41) {$121$};
    \node at (299,224.41) {$(t_2)$};
    \node at (319.5,185.41) {$102$};
    \node at (283,216.41) {$112$};
    \node at (283,224.41) {$(t_1)$};
    \node at (261.5,185.41) {$101$};
    \node at (266,177.41) {$110$};
    \node at (315,177.41) {$120$};
    \draw[red]    (348,109) -- (314,109);
    \draw[red]    (352,117) -- (336,142);
    \draw[red]    (336,142) -- (295,142);
    \draw[red]    (314,109) -- (295,142);
    \draw[red]    (289.76,151.5) -- (295,142);
    \draw[red]    (270.5,185.41) -- (289.76,151.5);
    \draw[red]    (310.5,185.41) -- (289.76,151.5);
    \draw[red]    (270.5,185.41) -- (286,209.41);
    \draw[red]    (310.5,185.41) -- (296,209.41);
    \end{tikzpicture}
    \caption{Sierpi\'{n}ski graph $S^3_3$}
    \label{fig:S334}
    \end{figure}
    
\end{proof}
\textbf{Definitions:}
\begin{enumerate}
    \item $\delta^{(n)}(s,t)=d_{n+1}(s_1,t_1)+d_{n+1}(s_1,t_2)+d_{n+1}(s_2,t_1)+d_{n+1}(s_2,t_2)$, \\for $s,t \in V(\widehat{S}^{n}_3) \setminus \{\widehat{0}, \widehat{1}, \widehat{2}\}$
    \item $\delta^{(n)}(s)=\sum_{t \in V'(\widehat{S}^{n}_3)}\delta^{(n)}(s,t)$, where $V'(\widehat{S}^{n}_3)=V(\widehat{S}^{n}_3)\setminus\{\widehat{0}, \widehat{1}, \widehat{2}\}$
\end{enumerate}
When now consider the distances between any two vertices $s$ and $t$, where $s, t \in V(\widehat{S}^n)\setminus\{\widehat{0}, \widehat{1}, \widehat{2}\}$, and try to find bounds for $\delta^{(n)}(s,t)$. The following lemma would be crucial in determining the bounds for $\delta^{(n)}(s,t)$.
\begin{lem}\label{obs:delta}
    $\delta^{(n)}(s,t) \equiv x$ $(\mathrm{mod}$ $8)$, where $x \in \{0, 4, 6\}$
\end{lem}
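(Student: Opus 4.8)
The plan is to read the congruence straight off the four-case analysis already carried out in the proof of Theorem~\ref{thm:2vert}, which classifies every pair $s,t \in V(\widehat{S}^n_3)\setminus\{\widehat 0,\widehat 1,\widehat 2\}$ according to how the shortest paths between their preimages behave in $S^{n+1}_3$. Recall that, after relabelling the two preimages of $s$ and of $t$ so that the shortest $\{s_2,t_1\}$-path avoids $s_1$ and $t_2$, exactly one of Cases~\ref{itm:1}--\ref{itm:4} occurs. The key observation is that in each of those cases the proof establishes more than the ceiling formula of equation~\ref{ref:eq1}: it establishes an \emph{exact} integer identity of the form $8\,\widehat{d}_n(s,t) = \delta^{(n)}(s,t) + c$, where $c$ is a nonnegative constant depending only on the case.

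First I would record the value of $c$ in each case by inspecting the combined equations. Case~\ref{itm:1} gives $c=0$ (equation~\ref{ref:eq6}); Case~\ref{itm:2} and both subcases of Case~\ref{itm:3} give $c=2$; and Case~\ref{itm:4} gives $c=4$. Since $\widehat{d}_n(s,t)$ is a nonnegative integer, the quantity $\delta^{(n)}(s,t)+c$ is a multiple of $8$, whence $\delta^{(n)}(s,t)\equiv -c \pmod 8$. Reading off the three possible values of $c$ then yields the residues $0$, $-2\equiv 6$, and $-4\equiv 4$ modulo $8$, so that $\delta^{(n)}(s,t)\bmod 8 \in\{0,4,6\}$, exactly as claimed.

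The genuine content of the lemma — and the only point that needs care — is that the residue $2$ never occurs, which is equivalent to saying that no case produces the constant $c=6$. This is precisely what the exhaustiveness of Cases~\ref{itm:1}--\ref{itm:4} guarantees: the constants arising are $0,2,2,4$ and nothing else. Thus the main obstacle is not the arithmetic but the appeal to exhaustiveness. I would want to confirm that the four cases, together with the normalising assumption on the $\{s_2,t_1\}$-path, genuinely cover every admissible configuration of the four cross-distances, since a missed configuration is exactly what could introduce a fifth constant (and hence the forbidden residue $2$). Granting the exhaustiveness already asserted in Theorem~\ref{thm:2vert}, the lemma follows with no further computation.
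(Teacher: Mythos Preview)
Your argument is correct and is precisely the reasoning the paper intends. The paper states Lemma~\ref{obs:delta} without an explicit proof, but its later usage makes the intended justification clear: throughout the proofs of Lemmas~\ref{obs-As} and~\ref{obs:Bs} and Theorem~\ref{thm:r3}, the residue of $\delta^{(n)}(s,t)$ modulo~$8$ is read off directly from which of Cases~\ref{itm:1}--\ref{itm:4} of Theorem~\ref{thm:2vert} applies (e.g.\ ``$\delta^{(n)}(s,t)\equiv 6\pmod 8$ as they follow a path similar to \ref{itm:3}''), which is exactly your derivation of the constants $c\in\{0,2,2,4\}$ from the combined equations in each case.
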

\begin{remark}\label{thm:r6}
If we consider some $s \in V(\widehat{S}^n)\setminus\{\widehat{0}, \widehat{1}, \widehat{2}\}$, then:
$$d'_{n+1}(s_1)+d'_{n+1}(s_2)=\delta^{(n)}(s).$$
\end{remark}
We now look at the distances of all non-median vertices from all other \\ vertices of the Sierpi\'{n}ski triangle graph $\widehat{S}^n_3$.
\begin{lem}~\label{obs-As}
    If $V'(\widehat{S}^n_3)=V(\widehat{S}^n_3)\setminus \{\widehat{i}, \widehat{j}, \widehat{k}\}$, then for any $s \in V'(\widehat{S}^n_3) \setminus M(\widehat{S}^n_3)$, we have 
    \begin{equation*}
    \delta^{(n)}(s)+3^n-3 \le 8\widehat{d}'_{n}(s)
    \end{equation*}
\end{lem}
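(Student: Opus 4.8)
The plan is to start from $\widehat d'_n(s)=\sum_{t\in V'(\widehat S^n_3),\,t\neq s}\widehat d_n(s,t)$ and convert the ceiling in Theorem~\ref{thm:2vert} into an additive correction. By Theorem~\ref{thm:2vert} we have $8\widehat d_n(s,t)=8\lceil \delta^{(n)}(s,t)/8\rceil$, and by Lemma~\ref{obs:delta} the residue $\delta^{(n)}(s,t)\bmod 8$ lies in $\{0,4,6\}$. Hence $8\widehat d_n(s,t)=\delta^{(n)}(s,t)+c(s,t)$, where the rounding surplus is $c(s,t)=0$ if $\delta^{(n)}(s,t)\equiv 0$, $c(s,t)=2$ if $\delta^{(n)}(s,t)\equiv 6$, and $c(s,t)=4$ if $\delta^{(n)}(s,t)\equiv 4 \pmod 8$; these three residues correspond exactly to Case~\ref{itm:1}, to Cases~\ref{itm:2}/\ref{itm:3}, and to Case~\ref{itm:4} in the proof of Theorem~\ref{thm:2vert}. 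Summing over $t\neq s$ and using Remark~\ref{thm:r6} to identify $\sum_{t\neq s}\delta^{(n)}(s,t)=\delta^{(n)}(s)$ gives the master identity
\begin{equation*}
8\widehat d'_n(s)=\delta^{(n)}(s)+\sum_{t\in V'(\widehat S^n_3),\,t\neq s}c(s,t),
\end{equation*}
so the lemma is equivalent to the purely combinatorial estimate $\sum_{t\neq s}c(s,t)\ge 3^n-3$.

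Next I would turn this into a counting statement. Writing $N_1$, $N_{23}$, $N_4$ for the number of $t\neq s$ in $V'(\widehat S^n_3)$ falling under Case~\ref{itm:1}, Cases~\ref{itm:2}/\ref{itm:3}, and Case~\ref{itm:4} respectively, we have $\sum_{t\neq s}c(s,t)=2N_{23}+4N_4$ and $N_1+N_{23}+N_4=|V'(\widehat S^n_3)|-1=\tfrac{3^{n+1}-5}{2}$. Eliminating $N_{23}$ shows the target inequality is equivalent to
\begin{equation*}
N_1-N_4\le 3^n-1,
\end{equation*}
i.e.\ the aligned configurations (Case~\ref{itm:1}) may outnumber the fully misaligned ones (Case~\ref{itm:4}) by at most $3^n-1$. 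In particular it suffices to establish the cleaner bound $N_1\le 3^n-1$.

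To control $N_1$ I would first read off from the case analysis that $t$ lies in Case~\ref{itm:1} exactly when the two contracted edges $\{s_1,s_2\}$ and $\{t_1,t_2\}$ of $S^{n+1}_3$ are aligned along the geodesic joining them, so that the long cross path runs $s_1\!\to\! s_2\!\to\!\cdots\!\to\! t_1\!\to\! t_2$ through both inner endpoints. I would then exploit the self-similar decomposition of $\widehat S^n_3$ into three corner copies of $\widehat S^{n-1}_3$ that pairwise share a single primitive-image vertex, and classify $t$ by which copy it occupies relative to $s$: when $t$ sits in a copy different from that of $s$, every geodesic is routed through one of the shared corner vertices, which fixes the orientation of $\{t_1,t_2\}$ and lets me count alignments by position, whereas for $t$ in the same copy I would descend recursively. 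The non-median hypothesis enters precisely here: a central (median) vertex sees the three copies symmetrically and attains the maximal number of aligned partners, while a non-central $s$ loses the alignments on at least one side, which is what forces $N_1\le 3^n-1$.

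The main obstacle is this final counting step, namely giving an exact, position-dependent description of the Case~\ref{itm:1} partners of a given non-median $s$ and showing their number never exceeds $3^n-1$. The difficulty is that a naive induction does not transfer cleanly, because a vertex that is non-median in $\widehat S^n_3$ may well be median inside its corner copy $\widehat S^{n-1}_3$; I therefore expect to need a direct geometric characterisation of alignment, tracking for each $t$ which endpoint of its edge faces $s$, rather than a black-box recursion, together with careful treatment of the boundary vertices $0,1,2$ shared between the copies.
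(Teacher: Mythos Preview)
Your reduction is correct and is exactly the first move the paper makes implicitly: from Theorem~\ref{thm:2vert} and Lemma~\ref{obs:delta} one gets $8\widehat d'_n(s)=\delta^{(n)}(s)+\sum_{t}c(s,t)$ with $c(s,t)\in\{0,2,4\}$, so the lemma reduces to $\sum_t c(s,t)\ge 3^n-3$. (A minor slip: by definition $\delta^{(n)}(s)=\sum_{t\in V'}\delta^{(n)}(s,t)$ includes the term $t=s$, which equals $2$, so Remark~\ref{thm:r6} does not give $\sum_{t\neq s}\delta^{(n)}(s,t)=\delta^{(n)}(s)$ on the nose; this is a harmless off-by-two that the paper itself glosses over.) Your further reformulation as $N_1-N_4\le 3^n-1$ is also correct, and your intuition that median vertices realise $N_1=3^n-1$ exactly is borne out by Lemma~\ref{obs:Bs} and Theorem~\ref{thm:r3}.

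The genuine gap is that you stop precisely where the work begins. You yourself flag the ``main obstacle'' as the counting step and only sketch a plan (recursive descent through the three corner copies, with a caveat that the induction does not transfer cleanly). Nothing in the proposal actually establishes $N_1\le 3^n-1$ or $\sum_t c(s,t)\ge 3^n-3$; the non-median hypothesis is invoked only heuristically (``loses the alignments on at least one side''). As it stands this is an outline, not a proof.

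For comparison, the paper does not try to upper-bound $N_1$. It takes the complementary, constructive route: it classifies the non-median vertex $s$ by the \emph{orientation} of its contracted edge $\{s_1,s_2\}$ in $S^{n+1}_3$ (parallel to $\{ik^n,ki^n\}$, to $\{ij^n,ji^n\}$, on the central axis $\{i^mjk^{n-m},i^mkj^{n-m}\}$, or off-axis parallel to $\{jk^n,kj^n\}$ inside some $i^mjS^{n-m}_3$), and for each of these four cases explicitly exhibits a set of partner edges $t$ with $\delta^{(n)}(s,t)\equiv 6$ or $4\pmod 8$ whose corrections $c(s,t)$ sum to at least $3^n-3$. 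In the first three cases one finds $\tfrac{3^n-3}{2}$ partners each contributing at least $2$; the fourth (off-axis) case is the delicate one and requires a finer breakdown mixing residue-$6$ and residue-$4$ partners across several subgraphs, together with a parity/distance matching argument to locate one extra partner per level. This orientation-based case split is exactly the ``direct geometric characterisation of alignment'' you anticipate needing, and it bypasses the recursion difficulty you identified.
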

\begin{proof}
    We start by defining non-clique edges for Sierpi\'nski graphs.
    \begin{definition}
        Non-clique edges in a Sierpi\'nski graphs are those edges which do not belong to a $3$-cycle and which when contracted from the vertices of $\widehat{S}^n_3$.
    \end{definition}
    We consider here all non-clique edges in subgraph $iS^n_3$, whose end vertices are non-median vertices. We extend this result to the rest of the graph $S^{n+1}_3$ by symmetry. We add some value $r \in \{0,1,2,3,4,5,6,7\}$ to some $x$, so that $x$ is perfectly divisible by 8, that is, $\lceil \frac{x}{8} \rceil$=$\frac{x+r}{8}$. Before we delve into the proof, we define the concept of parallel edges in Sierpi\'nski graphs.
    \begin{definition}
        In a Sierpi\'nski graph $S^n_3$, any two edges $\{s_1,s_2\}$ and $\{t_1,t_2\}$ are called parallel edges if $d(s_1,t_1)=d(s_2,t_2)$, given that $s_1=\underline{s'_1}j$, $s_2=\underline{s'_2}k$, $t_1=\underline{t'_1}j$, and $t_2=\underline{t'_2}k$, where $\underline{s'_1}, \underline{s'_2}, \underline{t'_1}, \underline{t'_2} \in T^{n-1}$ and $j, k \in T$.
    \end{definition}
    \begin{enumerate}
        \item[\mylabel{itm:A1}{Case A1}:] We consider all non-clique edges $\{s_1, s_2\} \in E(S^{n+1}_3)$ parallel to the edge $\{ik^n, ki^n\}$. All non-clique edges $\{t_1, t_2\}$ in the subgraph $jS^n_3$ have value $\delta^{(n)}(s,t) \equiv 4$ or $6$ $(\mathrm{mod}$ $8)$ as they follow a path similar to \ref{itm:2} or \ref{itm:4}. (For example, let us consider $s_1=0022$, $s_2=0200$, $t_1=1022$ and $t_2=1200$ in Figure~\ref{fig:S43}) There are $\frac{3^n-3}{2}$ such edges.
        \item[\mylabel{itm:A2}{Case A2}:] We consider all non-clique edges $\{s_1, s_2\} \in E(S^{n+1}_3)$ parallel to the edge $\{ij^n, ji^n\}$. All non-clique edges $\{t_1, t_2\}$ in the subgraph $kS^n_3$ have value $\delta^{(n)}(s,t) \equiv 4$ or $6$ $(\mathrm{mod}$ $8)$ as they follow a path similar to \ref{itm:2} or \ref{itm:4}. (For example, let us consider $s_1=0001$, $s_2=0010$, $t_1=2012$ and $t_2=2021$ in Figure~\ref{fig:S43}) There are $\frac{3^n-3}{2}$ such edges.
        \item[\mylabel{itm:A3}{Case A3}:] We now consider all edges in $S^{n+1}_3$ of form $\{s_1, s_2\}$=$\{i^mjk^{n-m}, i^mkj^{n-m}\}$, for all $2 \le m \le n-1$. (For example, let us consider $s_1=0012$, and $s_2=0021$ in Figure~\ref{fig:S43})
        \begin{enumerate}
        \item[\mylabel{itm:A3a}{Case A3a}:] All non-clique edges $\{t_1, t_2\}$ parallel to $\{s_1, s_2\}$ in the subgraphs of form $i^ljS^{n-l}_3$ and $i^lkS^{n-l}_3$ (for all $0 \le l \le m-1$) have value $\delta^{(n)}(s,t) \equiv 6$ $(\mathrm{mod}$ $8)$ as they follow a path similar to \ref{itm:3}. (For example, let us consider $t_1=0212$ and $t_2=0221$ in Figure~\ref{fig:S43}) For each of these, there are $2(3^{n-l-2}+3^{n-l-3}+\cdots+3^0)=$ such edges. 
        \item[\mylabel{itm:A3b}{Case A3b}:] Now we consider the edges $e_1=\{ij^n, ji^n\}$ and $e_2=\{ik^n, ki^n\}$, where $e_1, e_2 \in E(S^{n+1}_3)$. All non-clique edges parallel to edges $e_1$ and $e_2$ in subgraphs $i^mjS^{n-m}_3$ and $i^mkS^{n-m}_3$, respectively have value $\delta^{(n)}(s,t) \equiv 6$ $(\mathrm{mod}$ $8)$ as they follow a path similar to \ref{itm:3}. There are $2(3^{n-m-2}+3^{n-m-3}+\cdots+3^0)$ such edges. 
        \item[\mylabel{itm:A3c}{Case A3c}:] All non-clique edges parallel to edges $e_2$ and $e_1$ in all subgraphs of form $i^ljS^{n-l}_3$ and $i^lkS^{n-l}_3$ (for all $m+1 \le l \le n-2$), respectively have value $\delta^{(n)}(s,t) \equiv 6$ $(\mathrm{mod}$ $8)$ as they follow a path similar to \ref{itm:3}. There are $2(3^{n-l-2}+3^{n-l-3}+\cdots+3^0)$ such edges.
        \item[\mylabel{itm:A3d}{Case A3d}:] Moreover, all non-clique edges $\{t_1, t_2\}$ of form $\{i^rjk^{n-r}, i^rkj^{n-r}\}$ \\ (for all $0 \le r \le n-1$ and $r \neq m$) have value $\delta^{(n)}(s,t) \equiv 6$ $(\mathrm{mod}$ $8)$  as it follows a path similar to \ref{itm:2}. (For example, let us consider $t_1=0122$ and $t_2=0211$ in Figure~\ref{fig:S43}) There are $n-1$ such edges.
        \end{enumerate}
        $\therefore$ Total no.of edges with value $\delta^{(n)}(m,t) \equiv 6$ $(\mathrm{mod}$ $8)$ in \ref{itm:A3} are:
        \begin{align*}
        \sum_{m=0}^{n-2}(2\sum_{l=0}^{l=m} 3^l)+(n-1)&=\sum_{m=0}^{n-2}(3^{m+1}-1)+(n-1)
    \\
        &=\sum_{m=0}^{n-2}(3^{m+1})
    \\ 
        &=\frac{3^n-3}{2}
        \end{align*}
        \item[\mylabel{itm:A4}{Case A4}:] Now we consider all the remaining non-clique edges $\{s_1, s_2\}$. All these non-clique edges are parallel to the edge $\{jk^n, kj^n\}$ and lie in some subgraph of form $i^mjS^{n-m}_3$ or $i^mkS^{n-m}_3$, where $m \in [1, n-2]$. (For example, let us consider $s_1=0212$ and $s_2=0221$ in Figure~\ref{fig:S43}) 
        \begin{enumerate}
        \item[\mylabel{itm:A4a}{Case A4a}:] All non-clique edges $\{t_1,t_2\}$ parallel to $\{s_1,s_2\}$ in all subgraphs of form $i^ljS^{n-l}$ and $i^lkS^{n-l}$, where $l \in [0, m-1]$, have value $\delta^{(n)}(s,t) \equiv 6$ $(\mathrm{mod}$ $8)$. (For example, let us consider $t_1=2012$ and $t_2=2021$ in Figure~\ref{fig:S43}) For each of these cases, we have $2\frac{3^{n-l}-3}{2\times3}$ such edges. 
        \item[\mylabel{itm:A4b}{Case A4b}:] Now we consider the edges $e_1=\{i^mki^{n-m}, i^{m+1}k^{n-m}\}$ and $e_2=\{i^mji^{n-m}, i^{m+1}j^{n-m}\}$, where $e_1, e_2 \in E(S^{n+1}_3)$. If $\{s_1, s_2\} \in E(i^mjS^{n-m}_3)$ then we consider edge $e_2$ and if $\{s_1, s_2\} \in E(i^mkS^{n-m}_3)$ then we consider edge $e_2$. We name our choice of $j$ or $k$ as $p$ and our choice of edge $e_1$ or $e_2$ as $e$.
        
        For all non-clique edges $\{t_1, t_2\}$ parallel to $e$ in subgraph $i^m(3-i-p)S^{n-m}_3$ have value $\delta^{(n)}(s,t) \equiv 6$ $(\mathrm{mod}$ $8)$. (For example, let us consider $t_1=0101$ and $t_2=0110$ in Figure~\ref{fig:S43}) There are $\frac{3^{n-m}-3}{2\times3}$ such edges. For all non-clique edges $\{t_1, t_2\}$ parallel to $e$ in subgraph $i^{m+1}S^{n-m}_3$ have value $\delta^{(n)}(s,t) \equiv 4$ $(\mathrm{mod}$ $8)$. (For example, let us consider $t_1=0001$ and $t_2=0010$ in Figure~\ref{fig:S43}) There are $\frac{3^{n-m}-3}{2\times3}$ such edges. 
        \item[\mylabel{itm:A4c}{Case A4c}:] Moving on, now we consider the graph $\widehat{S}^n_3$. We consider the vertex $s$ in $\widehat{S}^n_3$ formed by contracting the edge $\{s_1, s_2\}$. Both $s_1$ and $s_2$ can be expressed as $i^mqs'_1$ and $i^mqs'_2$, where $s'_1, s'_2 \in T^{n-m}$ and $q \in {j, k}$. Next we denote $\widehat{d}_n(s, i^{m-1}q)$ by $d$. Now we consider all subgraphs of $\widehat{S}^n_3$ that have been formed by contracting non-clique edges of subgraphs of $S^{n+1}_3$ of form $i^l(3-i-q)S^{n-l}_3$, where $l \in [0, m-1]$.
        \begin{equation}
        \widehat{d}_n(s, i^lq)=2^{n-m}-d+\sum_{r=n-m}^{n-l+1}+2^{n-l}-x
        \label{eq:1}
        \end{equation}
        \begin{equation}
        \widehat{d}_n(s, i^{l+1})=d+\sum_{r=n-m}^{n-l+1}+2^{n-l}+x.
        \label{eq:2}
        \end{equation}
        Equating equations $\ref{eq:1}$ and $\ref{eq:2}$, we get $x=2^{n-m-1}-d$. So, we consider the vertex $t$ for which $t$ lies on the shortest path between $i^lq$ and $i^{l+1}$ and also, $\widehat{d}_n(t, i^{l+1})=x$. The edge $\{t_1, t_2\}$ in $S^{n+1}_3$ that is contracted to form vertex $t$ in $\widehat{S}^n_3$, also has value $\delta^{(n)}(s, t) \equiv 6$ $(\mathrm{mod}$ $8)$. (For example, let us consider $t_1=1220$ and $t_2=1202$ in Figure~\ref{fig:S43}) There are $m$ such edges in $S^{n+1}_3$.
        \end{enumerate}
        Now we use $F$ to represent the set of all the non-clique edges $t=\{t_1, t_2\}$ mentioned above in cases \ref{itm:A4a}, \ref{itm:A4b}, and \ref{itm:A4c}.
        \begin{align*}
        \sum_{t \in F}& \frac{\delta^{(n)}(s, t)+r}{8} \\
        &=\frac{1}{8}\left [\sum_{t \in F}\delta^{(n)}(s, t)+\sum_{r=0}^{m-1}\{\frac{3^{n-r}-3}{2\times3}(2\times2)\}+\frac{2(3^{n-m}-3)}{2\times3}+\frac{4(3^{n-m}-3)}{2\times3}+2 m \right ]
\\      
        &=\frac{1}{8}\left [\sum_{t \in F}\delta^{(n)}(s, t)+3^n-3 \right ]
\end{align*}
\begin{align*}
       \text{Therefore,} \sum_{t \in V'(\widehat{S}^n_3)} \frac{\delta^{(n)}(s, t)+r}{8} &\le \sum_{t \in V'(\widehat{S}^n_3)}\left\lceil \frac{\delta^{(n)}(s,t)}{8} \right\rceil
       \\
        \Longrightarrow \frac{\delta^{(n)}(s)+3^n-3}{8} &\le \sum_{t \in V'(\widehat{S}^n_3)}\left\lceil \frac{\delta^{(n)}(s,t)}{8} \right\rceil
        \\
        \Longrightarrow \delta^{(n)}(s)+3^n-3 &\le 8\widehat{d}'_{n}(s)
        \end{align*}
    \end{enumerate}
    However, for the first three cases in Lemma~\ref{obs-As}, namely \ref{itm:A1}, \ref{itm:A2}, and \ref{itm:A3}; let us consider E' to be the set of all aforementioned non-clique edges $t=\{t_1,t_2\}$, where $|E'|=\frac{3^n-3}{2}$
        \begin{align*}
        \sum_{t \in E'} \frac{\delta^{(n)}(s, t)+2}{8}&=\frac{1}{8}\left [\sum_{t \in E'} \delta^{(n)}(s, t)+3^n-3 \right ] \le \sum_{t \in E'} \frac{\delta^{(n)}(s, t)+r}{8}
        \\
        \therefore \sum_{t \in V'(\widehat{S}^n_3)} \frac{\delta^{(n)}(s, t)+r}{8} &\le \sum_{t \in V'(\widehat{S}^n_3)}\left\lceil \frac{\delta^{(n)}(s,t)}{8} \right\rceil
        \\
        \Longrightarrow \frac{1}{8}[\delta^{(n)}(s)+3^n-3] &\le \sum_{t \in V'(\widehat{S}^n_3)}\left\lceil \frac{\delta^{(n)}(s,t)}{8} \right\rceil
        \\
        \Longrightarrow \delta^{(n)}(s)+3^n-3 &\le 8\widehat{d}'_{n}(s)
        \end{align*}
\end{proof}
\begin{theorem}~\label{thm:r2}
$d'_n(s) > d'_{S^n}(m)$, where $m \in M(S^n)$ and $s \in V(S^n)\setminus M(S^n)$
\end{theorem}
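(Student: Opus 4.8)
The plan is to reduce the modified total distance $d'_n$ to the ordinary total distance $d_n$, whose minimizers are by definition the median $M(S^n)$ (here $d'_{S^n}(m)$ is read as $d'_n(m)$). By definition $d'_n(v)$ is obtained from $d_n(v)$ by discarding exactly two kinds of terms: the distances from $v$ to the three extreme vertices $0^n,1^n,2^n$, and the distance from $v$ to its unique non-clique partner (when $v$ is not extreme). Writing $\sigma(v)$ for the first sum and $\pi(v)$ for the second, we have $d'_n(v)=d_n(v)-\sigma(v)-\pi(v)$, so everything hinges on showing that $\sigma+\pi$ is essentially constant across $V(S^n_3)$, which makes minimizing $d'_n$ equivalent to minimizing $d_n$.

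First I would invoke Theorem~\ref{thm:extreme}: the sum of distances from any vertex to the three extreme vertices is the constant $2(2^n-1)$, so $\sigma(v)=2(2^n-1)$ for every $v$ and this term cancels in any comparison. Next I observe that the non-clique partner of a non-extreme vertex $v$ is joined to $v$ by a single (type-1) edge, so $\pi(v)=1$; an extreme vertex has no non-clique partner, so there $\pi(v)=0$. I would also record, from the explicit descriptions $M(S^2)=\{ij\ |\ \{i,j\}\in{T\choose 2}\}$ and $M(S^n)=\{ijk^{n-2}\}$ for $n\geq 3$ given above, that every median vertex is non-extreme (the statement is vacuous for $n\leq 1$, where $V(S^n)=M(S^n)$). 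Hence for a median $m$ we always have $d'_n(m)=d_n(m)-2(2^n-1)-1$.

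It then remains to compare with an arbitrary non-median $s\in V(S^n)\setminus M(S^n)$. If $s$ is non-extreme, then $d'_n(s)-d'_n(m)=d_n(s)-d_n(m)$; if $s$ is extreme, then $d'_n(s)-d'_n(m)=d_n(s)-d_n(m)+1$. In both cases the sign is governed by $d_n(s)-d_n(m)$, which is strictly positive: $m$ minimizes the total distance (equivalently $\overline{d}$) over $V(S^n)$ while $s\notin M(S^n)$ does not, so $d_n(s)>d_n(m)$. This yields $d'_n(s)>d'_n(m)$ in every case.

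The proof is genuinely short once Theorem~\ref{thm:extreme} and the unit length of non-clique edges are in hand; the only care needed — and the main (mild) obstacle — is the bookkeeping of extreme vertices, which carry no non-clique partner. One must check that this asymmetry can only help the inequality (it supplies the extra $+1$ in the extreme-$s$ case) and that no median vertex is itself extreme, so that the subtracted constant for $m$ is indeed $2(2^n-1)+1$. With these structural points confirmed, the strict inequality follows immediately from the defining minimality of the median.
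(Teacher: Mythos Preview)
Your proof is correct and rests on the same core observation as the paper's, namely that $s\notin M(S^n)$ forces $d_n(s)>d_n(m)$ by the very definition of the median. The paper's own proof is in fact just that single sentence and does not spell out why passing from $d_n$ to $d'_n$ preserves the strict inequality; you supply exactly that missing bookkeeping by invoking Theorem~\ref{thm:extreme} to make $\sigma$ constant, observing $\pi\in\{0,1\}$ with $\pi(m)=1$ for median $m$ (since the explicit median sets for $n\geq 2$ contain no extreme vertex), and checking that the only asymmetric case (extreme $s$) contributes an extra $+1$ on the correct side. So your argument is the same in spirit but strictly more complete than the one-line justification in the paper.
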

\begin{proof}
        This follows directly from the fact that the average distance of the median vertices is less than the average distance of the non-median vertices.
\end{proof}
\begin{figure}[htbp]
     \centering
     \includegraphics[scale=0.3]{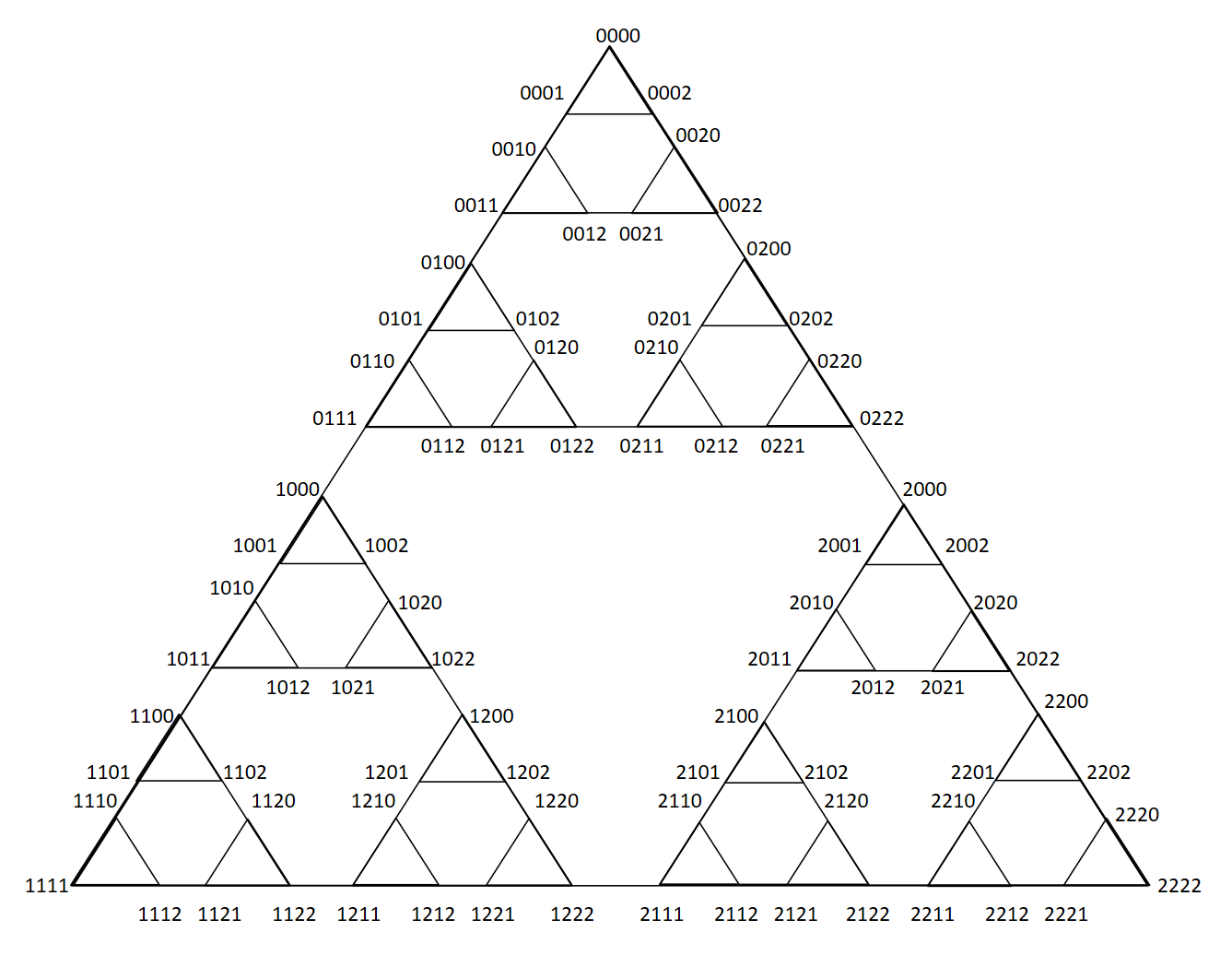}
     \caption{$S^4_3$}
     \label{fig:S43}
\end{figure}

We now try to evaluate about the distance of median vertices from other vertices in the Sierpi\'{n}ski triangle graph $\widehat{S}^n_3$.
\begin{lem}~\label{obs:Bs}
    If $V'(\widehat{S}^n_3)=V(\widehat{S}^n_3)\setminus \{\widehat{i}, \widehat{j}, \widehat{k}\}$, then for any $m \in M(\widehat{S}^n_3)$, we have exactly $\frac{3^n-3}{2}$ vertices $t \in V'(\widehat{S}^n_3)$ for which $\delta^{(n)}(m,t) \equiv 6 (\mathrm{mod} 8)$.
\end{lem}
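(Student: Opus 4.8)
The plan is to read the lemma through the case analysis of Theorem~\ref{thm:2vert}. Tracking the rounding constant in each branch, one gets $\delta^{(n)}(m,t)\equiv 0 \pmod 8$ exactly in Case~\ref{itm:1}, $\delta^{(n)}(m,t)\equiv 6 \pmod 8$ exactly in Cases~\ref{itm:2} and~\ref{itm:3}, and $\delta^{(n)}(m,t)\equiv 4 \pmod 8$ exactly in Case~\ref{itm:4} (consistent with the residues $\{0,4,6\}$ of Lemma~\ref{obs:delta}). So the statement to prove is that, for a fixed median vertex $m$, precisely $\tfrac{3^n-3}{2}$ of the edges $t\in V'(\widehat{S}^n_3)$ sit in Case~\ref{itm:2}/\ref{itm:3} relative to $m$; I do not need to control Case~\ref{itm:4} at all.

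First I would cut down the work by symmetry. The relabellings of $T$ induce automorphisms of $\widehat{S}^n_3$ that leave $\delta^{(n)}$ invariant, and under this $S_3$-action the six median vertices split into the two orbits $\{0,1,2\}$ (the side midpoints, corresponding to edges $\{ij^{n},ji^{n}\}$ of $S^{n+1}_3$) and $\{00,11,22\}$ (the subtriangle centres, corresponding to edges $\{ijk^{n-1},ikj^{n-1}\}$ with $j\neq k$). Hence it suffices to settle the count for the two representatives $m=0$, with edge $\{12^{n},21^{n}\}$, and $m=00$, with edge $\{012^{n-1},021^{n-1}\}$. For each representative I would organise the edges $t$ by the self-similar decomposition $S^{n+1}_3=0S^{n}_3\cupdot 1S^{n}_3\cupdot 2S^{n}_3$ together with the three connecting edges, and within each subcopy by the parallel-edge classes introduced in the proof of Lemma~\ref{obs-As}; for each class I would determine the Case by locating the four cross-geodesics between $\{m_1,m_2\}$ and $\{t_1,t_2\}$.

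The expectation is that the edges $t$ with $\delta^{(n)}(m,t)\equiv 6$ form an aligned family indexed exactly like the family in Case~\ref{itm:A3} of Lemma~\ref{obs-As}, so that summing the contributions over the levels of the recursion again telescopes to the geometric series $\sum_{\ell=0}^{n-2}3^{\ell+1}=\tfrac{3^n-3}{2}$, and that the two orbit representatives produce this identical total even though their actual $\delta\equiv 6$ sets differ. A direct check for $n=2$ supports this: with $m=0$ the set is $\{00,12,21\}$ and with $m=00$ it is $\{0,10,20\}$, each of size $3=\tfrac{3^2-3}{2}$. The comparison with Lemma~\ref{obs-As} is instructive: for a non-median $s$ the aligned family still contributes $\tfrac{3^n-3}{2}$ edges with $\delta\equiv 6$, but there are additional $\delta\equiv 6$ and $\delta\equiv 4$ contributions (the source of the inequality there), whereas for a median $m$ those extra contributions should degenerate into Case~\ref{itm:1}, leaving the count exact.

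The main obstacle is the geodesic bookkeeping that fixes the Case of each class of $t$. Naive additivity of distances across the three subcopies is \emph{not} valid: a shortest $\{m_i,t_j\}$-path may reroute through the other median endpoint (equivalently through the contracted edge $m$ itself), and this is exactly what converts a configuration that would otherwise be Case~\ref{itm:1} into the Case~\ref{itm:3} configuration that the lemma counts. Detecting precisely when this rerouting occurs—and in particular whether the far endpoint of $m$ is used when $t$ lies in the opposite subcopy—is the delicate step; once the Case of every class is pinned down, the summation is routine. I expect the cleanest way to control the rerouting is to exploit that $m_1,m_2\in M(S^{n+1}_3)$, so that both endpoints of $m$ are centrally placed and the relevant geodesics are forced into the aligned Cases~\ref{itm:1}/\ref{itm:3} for almost all classes, with the genuinely generic Case~\ref{itm:4} confined to a region of $0S^{n}_3$ that contributes nothing to the $\delta\equiv 6$ tally.
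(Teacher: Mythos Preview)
Your plan is essentially the paper's own approach: reduce by the $S_3$-symmetry to the two representative median edges $\{ijk^{n-1},ikj^{n-1}\}$ and $\{jk^n,kj^n\}$, classify the non-clique edges $t$ by parallel-edge families, read off the residue of $\delta^{(n)}(m,t)$ from the Cases of Theorem~\ref{thm:2vert}, and sum the resulting geometric series to $\tfrac{3^n-3}{2}$. One small correction to your expectations: for a median $m$ the paper shows (Theorem~\ref{thm:r3}) that \emph{every} edge outside the $\tfrac{3^n-3}{2}$ family lands in Case~\ref{itm:1}, so no Case~\ref{itm:4} edges occur at all---this does not affect the present lemma, but it is exactly what is needed downstream for the equality in Theorem~\ref{thm:r4}.
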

\begin{proof}
    We will consider the general $S^{n+1}_3$ graph. We consider the edge $\{ijk^{n-1},ikj^{n-1}\}$ in the subgraph $iS^n_3$ and the edge $\{jk^n,kj^n\}$ and extend it to the rest of the graph $S^{n+1}_3$ by symmetry. For all $x$, we may add some value $r \in \{0,1,2,3,4,5,6,7\}$ to $x$ so that $x$ is perfectly divisible by $8$, that is, $\lceil \frac{x}{8} \rceil$=$\frac{x+r}{8}$.
    \begin{enumerate}
        \item[\mylabel{itm:B1}{Case B1}:] We consider the edge $\{m_1, m_2\}$=$\{ijk^{n-1},ikj^{n-1}\}$. (For example, let us consider $m_1=0122$ and $m_2=0211$ in Figure~\ref{fig:S43})
        \begin{enumerate}
        \item[\mylabel{itm:B1a}{Case B1a}:] All non-clique edges $\{t_1, t_2\}$ in subgraphs $jS^n_3$ and $kS^n_3$, that are parallel to the edge $\{m_1, m_2\}$ have value $\delta^{(n)}(m,t) \equiv 6$ $(\mathrm{mod}$ $8)$ as it follows a path similar to \ref{itm:3}. (For example, let us consider $t_1=1012$ and $t_2=1021$ in Figure~\ref{fig:S43}) There are $2(3^{n-2}+3^{n-3}+\cdots+3^0)$ such edges.
        \item[\mylabel{itm:B1b}{Case B1b}:] Now we consider the edges $e_1=\{i^2j^{n-1}, iji^{n-1}\}$ and $e_2=\{i^2k^{n-1}, iki^{n-1}\}$. Now all non-clique edges $\{t_1, t_2\}$ in subgraphs $ijS^{n-1}_3$ and $ikS^{n-1}_3$ parallel to edges $e_1$ and $e_2$, respectively have value $\delta^{(n)}(m,t) \equiv 6$ $(\mathrm{mod}$ $8)$  as it follows a path similar to \ref{itm:3}. (For example, let us consider $t_1=0202$ and $t_2=0220$ in Figure~\ref{fig:S43}) There are $2(3^{n-3}+3^{n-4}+\cdots+3^0)$ such edges. 
        \item[\mylabel{itm:B1c}{Case B1c}:] Moving on all non-clique edges $\{t_1, t_2\}$ in all subgraphs $i^mjS^{n-m}_3$ and $i^mkS^{n-m}_3$ (for all $2 \le m \le n-2$) parallel to $e_2$ and $e_1$, respectively have value $\delta^{(n)}(m,t) \equiv 6$ $(\mathrm{mod}$ $8)$ as it follows a path similar to \ref{itm:3}. For each of these cases, there are $2(3^{n-k-2}+3^{n-k-3}+\cdots+3^0)$ such edges. 
        \item[\mylabel{itm:B1d}{Case B1d}:] Moreover, all non-clique edges $\{t_1, t_2\}$ of form $\{i^mjk^{n-m}, i^mkj^{n-m}\}$ (for all $0 \le m \le n-1$ and $m \neq 1$) have value $\delta^{(n)}(m,t) \equiv 6$ $(\mathrm{mod}$ $8)$  as it follows a path similar to \ref{itm:2}. (For example, let us consider $t_1=0012$ and $t_2=0021$ in Figure~\ref{fig:S43}) There are $n-1$ such edges.
        \end{enumerate}
        $\therefore$ From cases \ref{itm:B1a}, \ref{itm:B1b}, \ref{itm:B1c}, and \ref{itm:B1d}, we get that total vertices $t \in V'(\widehat{S}^n_3)$ with value $\delta^{(n)}(m,t) \equiv 6$ $(\mathrm{mod}$ $8)$ are:
        \begin{align*}
        \sum_{m=0}^{n-2}(2\sum_{l=0}^{l=m} 3^l)+(n-1)&=\sum_{m=0}^{n-2}(3^{m+1}-1)+(n-1) 
        \\
        &=\sum_{m=0}^{n-2}(3^{m+1})
        \\
        &=\frac{3^n-3}{2}
        \end{align*}
        All other non-clique edges $\{t_1, t_2\}$ have value $\delta^{(n)}(m,t) \equiv 0$ $(\mathrm{mod}$ $8)$  as it follows a path similar to \ref{itm:1}.
        \item[\mylabel{itm:B2}{Case B2}:] Now we consider the edge $\{m_1, m_2\}$=$\{jk^n,kj^n\}$. (For example, let us \\ consider $m_1=1222$ and $m_2=2111$ in Figure~\ref{fig:S43})
        \begin{enumerate}
        \item[\mylabel{itm:B2a}{Case B2a}:] We consider the edges $e_1=\{ij^n, ji^n\}$ and $e_2=\{ik^n, ki^n\}$. All non-clique edges $\{t_1, t_2\}$ in subgraphs $jS^n_3$ and $kS^n_3$ parallel to edges $e_1$ and $e_2$, respectively have value $\delta^{(n)}(m,t) \equiv 6$ $(\mathrm{mod}$ $8)$  as it follows a path similar to \ref{itm:3}. (For example, let us consider $t_1=2102$ and $t_2=2120$ in Figure~\ref{fig:S43}) There are $2(3^{n-2}+3^{n-3}+\cdots+3^0)$ such edges. 
        \item[\mylabel{itm:B2b}{Case B2b}:] Next, all non-clique edges $\{t_1, t_2\}$ in all subgraphs $i^mjS^{n-m}_3$ and \\ $i^mkS^{n-m}_3$ (for all $1 \le m \le n-2$) parallel to $e_2$ and $e_1$, respectively have value $\delta^{(n)}(m,t) \equiv 6$ $(\mathrm{mod}$ $8)$ as it follows a path similar to \ref{itm:3}. (For example, let us consider $t_1=0201$ and $t_2=0210$ in Figure~\ref{fig:S43}) For each of these cases, there are $2(3^{n-k-2}+3^{n-k-3}+\cdots+3^0)$ such edges. 
        \item[\mylabel{itm:B2c}{Case B2c}:] Moreover, all non-clique edges $\{t_1, t_2\}$ of form $\{i^mjk^{n-m}, i^mkj^{n-m}\}$ (for all $1 \le m \le n-1$) have value $\delta^{(n)}(m,t) \equiv 6$ $(\mathrm{mod}$ $8)$  as it follows a path similar to \ref{itm:2}. (For example, let us consider $t_1=0122$ and $t_2=0211$ in Figure~\ref{fig:S43}) There are $n-1$ such edges.\\
        \end{enumerate}
        $\therefore$ From cases \ref{itm:B2a}, \ref{itm:B2b}, and \ref{itm:B2c}, we get that total vertices $t \in V'(\widehat{S}^n_3)$ with value $\delta^{(n)}(m,t) \equiv 6$ $(\mathrm{mod}$ $8)$ are:
        \begin{align*}
        \sum_{m=0}^{n-2}(2\sum_{l=0}^{l=m} 3^l)+(n-1)&=\sum_{m=0}^{n-2}(3^{m+1}-1)+(n-1)
        \\
        &=\sum_{m=0}^{n-2}(3^{m+1})
        \\
        &=\frac{3^n-3}{2}
        \end{align*}
        All other non-clique edges $\{t_1, t_2\}$ have value $\delta^{(n)}(m,t) \equiv 0$ $(\mathrm{mod}$ $8)$  as it follows a path similar to \ref{itm:1} as we will prove in Theorem \ref{thm:r3}.
    \end{enumerate}

    Thus, we get that for any $m \in M(\widehat{S}^n_3)$, we have exactly $\frac{3^n-3}{2}$ vertices $t \in V'(\widehat{S}^n_3)$ for which $\delta^{(n)}(m,t) \equiv 6 (\mathrm{mod} 8)$.
    
\end{proof}
\begin{theorem}\label{thm:r3}
    All non-clique edges other than the ones specified in proof of Lemma~\ref{obs:Bs} have value $\delta^{(n)}(m,t) \equiv 0$ $(\mathrm{mod}$ $8)$.
\end{theorem}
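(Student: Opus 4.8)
The plan is to translate the statement into the case language of Theorem~\ref{thm:2vert} and then argue by exclusion. By Lemma~\ref{obs:delta} every $\delta^{(n)}(m,t)$ is congruent to $0$, $4$, or $6$ modulo $8$, and the four cases of Theorem~\ref{thm:2vert} pin these residues down: \ref{itm:1} forces $\delta\equiv 0$, the configurations \ref{itm:2} and \ref{itm:3} force $\delta\equiv 6$, and \ref{itm:4} forces $\delta\equiv 4$. The proof of Lemma~\ref{obs:Bs} already isolates, for a median edge $m$, exactly the $\frac{3^n-3}{2}$ non-clique edges $t$ sitting in configuration \ref{itm:2} or \ref{itm:3}, hence with $\delta\equiv 6$. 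Therefore Theorem~\ref{thm:r3} is \emph{equivalent} to the assertion that none of the remaining non-clique edges is in configuration \ref{itm:4}: each of them must lie in \ref{itm:1}, i.e. $\delta\equiv 0$. As in Lemma~\ref{obs:Bs}, it suffices to treat one representative median edge of each type, namely $\{ijk^{n-1},ikj^{n-1}\}$ (the vertex $ii$) and $\{jk^n,kj^n\}$ (the vertex $i$), and to extend to all six median vertices through the peg-permuting automorphisms of $S^{n+1}_3$.

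The key observation driving the argument is a clean reading of \ref{itm:4}. Comparing equations \ref{ref:eq16}--\ref{ref:eq18} with the reference equation \ref{ref:eq2} shows that \ref{itm:4} occurs exactly when the four distances $d_{n+1}(m_1,t_1)$, $d_{n+1}(m_1,t_2)$, $d_{n+1}(m_2,t_1)$, $d_{n+1}(m_2,t_2)$ are all equal (each equals $2\widehat{d}_n(m,t)-1$); equivalently, each of $t_1,t_2$ is equidistant from the adjacent pair $m_1,m_2$ and the two common values agree. Since $d_{n+1}(m_1,m_2)=1$, the triangle inequality gives $|d_{n+1}(v,m_1)-d_{n+1}(v,m_2)|\le 1$ for every vertex $v$, so the four cross-distances always span a range of at most $2$, and \ref{itm:4} is precisely the degenerate range-$0$ case in which no geodesic from $t$ "prefers" either endpoint of $m$. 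Dually, \ref{itm:1} is the range-$2$ case in which a single endpoint of $m$ is strictly closer to both $t_1$ and $t_2$. The goal is thus to show that for a median (central bridge) edge this perfect balance never happens: one endpoint of $m$ always strictly dominates.

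Concretely, I would partition the remaining edges $t$ exactly as in the proof of Lemma~\ref{obs:Bs}, by the subgraph $i^l jS^{n-l}_3$ or $i^l kS^{n-l}_3$ containing them and by whether they are parallel to the relevant reference edges, and for each family compute, via the self-similar distance formula of Hinz et al.~\cite{hinz-2018}, which endpoint of $m$ a shortest $m$--$t$ path leaves through. For every family not already claimed in Lemma~\ref{obs:Bs}, this endpoint is unique and the same for $t_1$ and $t_2$, so one cross-distance is the strict minimum and another the strict maximum; that is configuration \ref{itm:1}, giving $\delta\equiv 0$. The main obstacle is the borderline family of edges lying on the axis of symmetry of $m$, namely the $\sigma$-invariant edges, where $\sigma$ is the transposition exchanging the two subgraphs bounded by $m$ (it swaps $m_1\leftrightarrow m_2$): there $\sigma$ already pairs two of the four distances, so they automatically coincide, and one must still verify that the remaining two differ. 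A short direct estimate showing $d_{n+1}(t_1,m_1)\neq d_{n+1}(t_1,m_2)$ on these edges breaks the tie, rules out all four distances being equal, and places them in \ref{itm:1} as well; this is the step I expect to require the most care.

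Finally, a count confirms completeness and matches the totals. The number of non-clique edges of $S^{n+1}_3$ is $\frac{3^{n+1}-3}{2}$, of which Lemma~\ref{obs:Bs} supplies $\frac{3^n-3}{2}$ with $\delta\equiv 6$; once the remaining $3^n-1$ edges (other than $m$ itself) are all shown to lie in \ref{itm:1} with $\delta\equiv 0$, none is left for \ref{itm:4}, and indeed $(3^n-1)+\frac{3^n-3}{2}=\frac{3^{n+1}-5}{2}=\frac{3^{n+1}-3}{2}-1$. The vanishing of the \ref{itm:4} contribution is exactly what will later yield the equality $8\widehat{d}'_n(m)=\delta^{(n)}(m)+3^n-3$ for median $m$ (using Remark~\ref{thm:r6}), in sharp contrast with the inequality of Lemma~\ref{obs-As} for non-median vertices, and it is this gap that ultimately forces the median of $\widehat{S}^n_3$ to be $\widehat{M}^n_0$.
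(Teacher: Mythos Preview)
Your overall strategy coincides with the paper's: partition the leftover non-clique edges by the subgraph and orientation class they sit in, and verify for each family that the pair $(m,t)$ lands in configuration~\ref{itm:1}. The paper does exactly this through four families C1--C4, asserting for each that the shortest-path pattern matches \ref{itm:1}. Your translation of the four cases into the range of the four cross-distances (range $2$, $1$, $1$, $0$ for \ref{itm:1}, \ref{itm:2}, \ref{itm:3}, \ref{itm:4} respectively) is a clean repackaging and would make the argument more transparent than the paper's.

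There is, however, a genuine gap in the logic as written. First, the claimed equivalence ``Theorem~\ref{thm:r3} $\Leftrightarrow$ no remaining edge is in \ref{itm:4}'' is circular: the proof of Lemma~\ref{obs:Bs} only \emph{exhibits} $\tfrac{3^n-3}{2}$ edges with $\delta\equiv 6$ and explicitly defers the claim that \emph{no other} edge has $\delta\equiv 6$ to the present theorem, so you must still rule out \ref{itm:2} and \ref{itm:3} here, not just \ref{itm:4}. Second, and more importantly, your working criterion---``the endpoint of $m$ that a shortest $m$--$t$ path leaves through is unique and the same for $t_1$ and $t_2$''---does not force \ref{itm:1}. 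In configuration~3a one has $d_{n+1}(m_2,t_a)=2\widehat d_n(m,t)-1$ and $d_{n+1}(m_1,t_a)=2\widehat d_n(m,t)$ for both $a\in\{1,2\}$: the endpoint $m_2$ strictly dominates for both $t_1$ and $t_2$, yet the four cross-distances have range~$1$ and $\delta\equiv 6$. What actually characterises \ref{itm:1} (range $2$) is dominance on \emph{both} sides: one endpoint of $m$ strictly closer to both $t_a$, \emph{and} one endpoint of $t$ strictly closer to both $m_b$. Your plan only checks the $m$-side, so the inference ``\dots so one cross-distance is the strict minimum and another the strict maximum'' is unjustified. Repairing it means adding, for every family, the $t$-side check (equivalently $d_{n+1}(m_2,t_1)\neq d_{n+1}(m_2,t_2)$), and once you do that you are carrying out precisely the C1--C4 casework the paper performs.
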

\begin{proof}
    We consider the non-clique edge in $\{m_1, m_2\}$=$\{i^ljk^{n-l}, i^lkj^{n-l}\}$, where $l \in \{0, 1\}$ and then we can extend this to the rest of the graph by virtue of symmetry. (For example, let us consider $s_1=0122$ and $s_2=0211$ in Figure~\ref{fig:S43}) 
    \begin{enumerate}
    \item[\mylabel{itm:C1}{Case C1}:] All non-clique edges in subgraphs $j^{p+1}S^{n-p}_3$ and $k^{p+1}S^{n-p}_3$, where $0 \le p \le l-1$, that are not parallel to the edge $\{jk^n, kj^n\}$ have value $\delta^{(n)}(m,t) \equiv 0$ $(\mathrm{mod}$ $8)$ as it follows a path similar to \ref{itm:1}. (For example, let us consider $t_1=2002$ and $t_2=2020$ in Figure~\ref{fig:S43})  
    \item[\mylabel{itm:C2}{Case C2}:] Now we consider the edges $e_1=\{ij^n, ji^n\}$ and edge $e_2=\{ik^n, ki^n\}$. All edges in subgraphs $i^ljS^{n-l}$ and $i^lkS^{n-l}$ not parallel to $e_1$ and $e_2$, respectively have value $\delta^{(n)}(m,t) \equiv 0$ $(\mathrm{mod}$ $8)$ as it follows a path similar to \ref{itm:1}. (For example, let us consider $t_1=0201$ and $t_2=0210$ in Figure~\ref{fig:S43}) 
    \item[\mylabel{itm:C3}{Case C3}:] Next, all non-clique edges not parallel to lines $e_2$ and $e_1$ in subgraphs $i^rjS^{n-r}$ and $i^rkS^{n-r}$, respectively where $r \in [l+1, n]$ have value $\delta^{(n)}(m,t) \equiv 0$ $(\mathrm{mod}$ $8)$ as it follows a path similar to \ref{itm:1}. 
    \item[\mylabel{itm:C4}{Case C4}:] All other non-clique edges of form either $\{i^mji^{n-m}, i^{m+1}j^{n-m}\}$ or \\ $\{i^mki^{n-m}, i^{m+1}k^{n-m}\}$, where $m \in [0, n-1]$, have value $\delta^{(n)}(m,t) \equiv 0$ $(\mathrm{mod}$ $8)$ as it follows a path similar to \ref{itm:1}. (For example, let us consider $t_1=0011$ and $t_2=0100$ in Figure~\ref{fig:S43}) 
    \end{enumerate}
All cases \ref{itm:C1}, \ref{itm:C2}, \ref{itm:C3}, and \ref{itm:C4}, are an exhaustive set of all vertices that are not accounted for in cases \ref{itm:B1}, and \ref{itm:B2}.

\end{proof}
\begin{theorem}\label{thm:r4}
    For $m_1, m_2 \in M(S^{n+1}_3)$, where $m_1$ and $m_2$ form a non-clique edge, we have $m \in V(\widehat{S}^n_3)$, which is formed by contracting the edge $\{m_1, m_2\}$, then we get that:
    \begin{equation*}
   8 \widehat{d}'_n(m) = \delta^{(n)}(m)+3^n-3
    \end{equation*}
\end{theorem}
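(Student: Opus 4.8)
The plan is to evaluate $\widehat{d}'_n(m)$ directly through Theorem~\ref{thm:2vert} and then read off the residue correction contributed by each vertex. Writing the restricted total distance as a sum over the non-primitive vertices and applying Theorem~\ref{thm:2vert} termwise,
\begin{equation*}
\widehat{d}'_n(m)=\sum_{t\in V'(\widehat{S}^n_3),\,t\neq m}\widehat{d}_n(m,t)
=\sum_{t\in V'(\widehat{S}^n_3),\,t\neq m}\left\lceil\frac{\delta^{(n)}(m,t)}{8}\right\rceil .
\end{equation*}
By Lemma~\ref{obs:delta} every summand satisfies $\delta^{(n)}(m,t)\equiv 0,4,$ or $6\pmod 8$, so I would write $\lceil\delta^{(n)}(m,t)/8\rceil=(\delta^{(n)}(m,t)+r(t))/8$ with a correction term $r(t)\in\{0,2,4\}$ corresponding respectively to the residues $0,6,4$.

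The heart of the argument is to pin down the corrections $r(t)$ exactly, and this is precisely where the median hypothesis enters. Since $m$ is the contraction of the median non-clique edge $\{m_1,m_2\}$ of $S^{n+1}_3$, it is exactly one of the vertices treated in Lemma~\ref{obs:Bs}: that lemma gives exactly $\tfrac{3^n-3}{2}$ vertices $t$ with $\delta^{(n)}(m,t)\equiv 6\pmod 8$, each contributing $r(t)=2$. Theorem~\ref{thm:r3} then guarantees that every remaining non-primitive vertex satisfies $\delta^{(n)}(m,t)\equiv 0\pmod 8$, contributing $r(t)=0$; in particular \emph{no} vertex produces residue $4$. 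Hence the total correction is $\sum_t r(t)=2\cdot\tfrac{3^n-3}{2}=3^n-3$.

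Multiplying through by $8$ and separating the two sums then yields
\begin{equation*}
8\,\widehat{d}'_n(m)=\sum_{t}\delta^{(n)}(m,t)+\sum_{t}r(t)=\delta^{(n)}(m)+(3^n-3),
\end{equation*}
which is the claimed identity. In collapsing $\sum_t\delta^{(n)}(m,t)$ to $\delta^{(n)}(m)$ I would invoke the same bookkeeping as Remark~\ref{thm:r6}, taking care that the self-contribution $\delta^{(n)}(m,m)=2\,d_{n+1}(m_1,m_2)=2$ — whose residue $2$ deliberately falls outside $\{0,4,6\}$ and is therefore not counted by Lemma~\ref{obs:delta} — is handled consistently, exactly as the exclusion of the non-clique partner in the definition of $d'_{n+1}$ absorbs it.

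The main obstacle is conceptual rather than computational: the whole proof hinges on the \emph{exact} residue census for a median vertex, namely that $m$ sees precisely $\tfrac{3^n-3}{2}$ vertices of residue $6$ and none of residue $4$. This exactness is what upgrades the inequality of Lemma~\ref{obs-As} — where stray residue-$4$ contributions force $\sum_t r(t)\ge 3^n-3$ for a non-median vertex — into the equality asserted here. Once Lemma~\ref{obs:Bs} and Theorem~\ref{thm:r3} are granted, the remaining work is only the careful summation above.
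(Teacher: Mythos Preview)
Your proof is correct and follows essentially the same route as the paper: both arguments write $\widehat{d}'_n(m)$ as a sum of ceilings via Theorem~\ref{thm:2vert}, invoke Lemma~\ref{obs:Bs} to count exactly $\tfrac{3^n-3}{2}$ vertices with residue~$6$, use Theorem~\ref{thm:r3} to certify that all remaining vertices have residue~$0$, and then sum the corrections $r(t)=2$ to obtain $3^n-3$. Your treatment is in fact slightly more explicit than the paper's about the role of the self-term $\delta^{(n)}(m,m)=2$ and its absorption through the definition of $d'_{n+1}$, a point the paper leaves implicit.
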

\begin{proof}
    We know that, for all x, we may add some value $r \in \{0,1,2,3,4,5,6,7\}$ to $x$ so that $x$ is perfectly divisible by $8$, that is, $\lceil \frac{x}{8} \rceil$=$\frac{x+r}{8}$. As we have seen that, there are exactly $\frac{3^n-3}{2}$ non-clique edges that have value $\delta^{(n)}(m,t) \equiv 6$ $(\mathrm{mod}$ $8)$ for any choice of non-clique edge $\{m_1, m_2\}$, whose end vertices are medians of $S^{n+1}_3$. Let, us say the set of all the $\frac{3^n-3}{2}$ non-clique edges $\{t_1, t_2\}$ represented by $t$ is called $E'$.\\
    We use Theorem~\ref{thm:r3} and the already established equation in Lemma~\ref{obs:Bs}, we get that:\\
    \begin{align*}
    \sum_{t \in E'} \frac{\delta^{(n)}(m, t)+r}{8} &= \sum_{t \in E'} \frac{\delta^{(n)}(m, t)+2}{8}
    \\
    &=\frac{1}{8}\left [\sum_{t \in E'} \delta^{(n)}(m, t)+3^n-3 \right]
    \\
    \\
    \therefore \sum_{t \in V'(S^{n+1}_3)} \frac{\delta^{(n)}(m, t)+r}{8} &= \sum_{t \in V'(S^{n+1}_3)}\left \lceil \frac{\delta^{(n)}(m,t)}{8} \right \rceil
\\   
    \Longrightarrow \frac{\delta^{(n)}(m)+3^n-3}{8} &= \sum_{t \in V'(S^{n+1}_3)}\left \lceil \frac{\delta^{(n)}(m,t)}{8} \right \rceil 
    \\
    \Longrightarrow \delta^{(n)}(m)+3^n-3 &= 8 \widehat{d}'_n(m)
    \end{align*}
    
\end{proof}
\begin{theorem}\label{thm:r5}
    Let us consider all vertices $m \in V(\widehat{S}^n)$ that are formed by contracting non-clique edges of $S^{n+1}_3$, whose end vertices belong to $M(S^{n+1}_3)$. Then, $m \in M(\widehat{S}^n_3)$.
\end{theorem}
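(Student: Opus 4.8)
The plan is to show that each vertex $m$ arising from the contraction of a median edge $\{m_1,m_2\}$ of $S^{n+1}_3$ (with $m_1,m_2\in M(S^{n+1}_3)$) globally minimizes the total distance $\widehat{d}_n$. First I would invoke Theorem~\ref{thm:primitive}: since the combined distance from any vertex to the three primitive vertices is the constant $2^{n+1}$, we have $\widehat{d}_n(v)=\widehat{d}'_n(v)+2^{n+1}$ for every $v\in V(\widehat{S}^n_3)$. Minimizing $\widehat{d}_n$ is therefore equivalent to minimizing $\widehat{d}'_n$, and it suffices to prove $\widehat{d}'_n(m)\le \widehat{d}'_n(v)$ for all $v$.

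The core of the argument is a comparison that the preceding lemmas have been engineered to make clean. For the median-edge vertex $m$, Theorem~\ref{thm:r4} gives the exact value $8\widehat{d}'_n(m)=\delta^{(n)}(m)+3^n-3$, while for every non-primitive vertex $s\notin\widehat{M}^n_0$, Lemma~\ref{obs-As} gives the matching lower bound $8\widehat{d}'_n(s)\ge \delta^{(n)}(s)+3^n-3$. Since the additive constant $3^n-3$ is identical in both, the comparison reduces to comparing the quantities $\delta^{(n)}$. For this I would use Remark~\ref{thm:r6}, which expresses $\delta^{(n)}(v)=d'_{n+1}(v_1)+d'_{n+1}(v_2)$ in terms of the contracted edge $\{v_1,v_2\}$, together with Theorem~\ref{thm:r2}, by which every non-median vertex of $S^{n+1}_3$ has strictly larger $d'_{n+1}$ than a median. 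Let $\mu$ denote the (common) value of $d'_{n+1}$ on $M(S^{n+1}_3)$. Then both endpoints of $m$ being medians gives $\delta^{(n)}(m)=2\mu$, whereas any $s\notin\widehat{M}^n_0$ has at least one endpoint that is a non-median of $S^{n+1}_3$, so $\delta^{(n)}(s)\ge 2\mu=\delta^{(n)}(m)$. Chaining these three facts yields $8\widehat{d}'_n(m)\le 8\widehat{d}'_n(s)$, as required.

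Two further points complete the proof. First, that $\mu$ is indeed common to all twelve medians---so that all six members of $\widehat{M}^n_0=\{0,1,2,00,11,22\}$ are medians, not merely some of them---follows from Theorem~\ref{thm:extreme}: the quantities $d_{n+1}$ and $d'_{n+1}$ differ by the constant $2(2^{n+1}-1)$, and since the medians of $S^{n+1}_3$ all share a common total distance $d_{n+1}$, they all share a common $d'_{n+1}=\mu$; hence every median-edge vertex has the same $\delta^{(n)}(m)=2\mu$ and the same $\widehat{d}'_n(m)$. Second, the three primitive vertices $\widehat{i}$ lie outside the edge-contraction framework and must be handled separately: for $n\ge 2$ they are the peripheral (corner) vertices of the gasket, realizing the diameter $2^n$ and maximal eccentricity, so a direct estimate (or the center/periphery description of Hinz et al.) gives $\widehat{d}_n(\widehat{i})>\widehat{d}_n(m)$. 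Combining everything, $\widehat{d}_n(m)\le \widehat{d}_n(v)$ for all $v\in V(\widehat{S}^n_3)$, that is $m\in M(\widehat{S}^n_3)$.

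I expect the main obstacle to be twofold. The substantive structural fact is the exact matching of the additive constant $3^n-3$ between the equality of Theorem~\ref{thm:r4} and the inequality of Lemma~\ref{obs-As}; it is precisely this coincidence---which itself rests on the residue bookkeeping of Lemma~\ref{obs:delta} and the counts of edges with $\delta^{(n)}\equiv 6\ (\mathrm{mod}\ 8)$---that lets the whole comparison collapse onto the single inequality $\delta^{(n)}(m)\le\delta^{(n)}(s)$. The remaining difficulty is that none of the $\delta$-machinery addresses the primitive vertices, so the peripherality of the three corners is the one ingredient that has to be imported from outside the chain of lemmas assembled above.
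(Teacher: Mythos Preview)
Your proposal is correct and follows essentially the same chain as the paper's proof: combine Remark~\ref{thm:r6} with Theorem~\ref{thm:r2} to obtain $\delta^{(n)}(s)>\delta^{(n)}(m)$, then sandwich with Lemma~\ref{obs-As} and Theorem~\ref{thm:r4} to conclude $\widehat{d}'_n(s)>\widehat{d}'_n(m)$. You are in fact more explicit than the paper on two points it leaves implicit, namely the reduction from $\widehat{d}_n$ to $\widehat{d}'_n$ via Theorem~\ref{thm:primitive} and the separate treatment of the primitive vertices $\widehat{i}$.
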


\begin{proof}
    Using Theorem~\ref{thm:r2} and Remark~\ref{thm:r6}, we get \textemdash
    \begin{align*}
    & d'_{n+1}(s_1)+d'_{n+1}(s_2)-2 > d'_{n+1}(m_1)+d'_{n+1}(m_2)-2 
\\
    & \Longrightarrow \delta^{(n)}(s) > \delta^{(n)}(m)
\\   
    & \text{ Now we use Lemma~\ref{obs-As} and Theorem~\ref{thm:r4}, we get \textemdash}\\
    & 8 \widehat{d}'_n(s) \ge \delta^{(n)}(s)+3^n-3 > \delta^{(n)}(m)+3^n-3 = 8 \widehat{d}'_n(m)
\\     
    & \Longrightarrow \widehat{d}'_n(s) > \widehat{d}'_n(m)    
    \end{align*}
    
\end{proof}

Thus, we have proved \textit{Theorem~\ref{hyp:basic}}.

\nopagebreak

\end{document}